\title{Rigidity of almost-isometric universal covers}
\author{Aditi Kar}
\address{Mathematical Institute\\
University of Oxford\\
Oxford\\
OX2 6GG\\
United Kingdom}
\author{Jean-Fran\c{c}ois Lafont}
\address{Department of Mathematics\\
                 Ohio State University\\
                 Columbus, Ohio 43210}
\author{Benjamin Schmidt}
\address{Department of Mathematics \\ 
                 Michigan State University \\
                 East Lansing, MI 48824}
\date{\today}
\theoremstyle{theorem}
\newtheorem{thm}{Theorem}[section]
\newtheorem{lem}[thm]{Lemma}
\newtheorem{cor}[thm]{Corollary}
\newtheorem{prop}[thm]{Proposition}
\theoremstyle{definition}
\newtheorem{rem}{Remark}
\newtheorem{exmp}[thm]{Example}
\newtheorem{ques}{Question}
\newtheorem{defn}[thm]{Definition}
\numberwithin{equation}{section}
\def\Pb{\ifmmode{\Bbb P}\else{$\Bbb P$}\fi}
\def\Z{\ifmmode{\Bbb Z}\else{$\Bbb Z$}\fi}
\def\Q{\ifmmode{\Bbb Q}\else{$\Bbb Q$}\fi}
\def\C{\ifmmode{\Bbb C}\else{$\Bbb C$}\fi}
\def\R{\ifmmode{\Bbb R}\else{$\Bbb R$}\fi}
\def\H{\ifmmode{\Bbb H}\else{$\Bbb H$}\fi}
\def\S{\ifmmode{S^2}\else{$S^2$}\fi}
\def\spec{\operatorname{Spec}}
\def\diam{\operatorname{diam}}
\def\S{\mathcal S}
\def\Isom{\operatorname{Isom}}
\def\Id{\operatorname{Id}}
\def\Min{\operatorname{Min}}
\begin{document}

\begin{abstract}
Almost-isometries are quasi-isometries with multiplicative constant one.  Lifting a pair of metrics on a compact space gives quasi-isometric metrics on the universal cover. Under some 
additional hypotheses on the metrics, we show that there is no almost-isometry between the universal covers.  We show that Riemannian manifolds which are almost-isometric have the same volume growth entropy. We establish various rigidity results as applications.
\end{abstract}
\maketitle

\setcounter{secnumdepth}{1}

\setcounter{section}{0}

\section{\bf Introduction}

{\it Quasi-isometries} are the natural morphisms in asymptotic geometry. Their definition involves both an additive constant $\geq 0$ and a multiplicative constant $\geq 1$.  {\it Bi-Lipschitz maps} are quasi-isometries with additive constant equal to zero; we define {\it almost-isometries} as quasi-isometries with multiplicative constant equal to one.  When looking at a general inequality, 
it is often important to understand the equality cases. Thus a natural problem is to identify conditions which force quasi-isometric spaces to be either bi-Lipschitz equivalent, or almost-isometric.


For discrete spaces, bi-Lipschitz maps coincide with \textit{bijective} quasi-isometries.  In \cite{Wh}, quasi-isometry classes of maps that contain a bijective quasi-isometry are characterized for uniformly discrete metric spaces of bounded geometry.  In particular, quasi-isometric finitely generated groups that are nonamenable are bi-Lipschitz equivalent. In contrast, there exist separated nets in $\mathbb{R}^2$ that are quasi-isometric but not bi-Lipschitz equivalent to $\mathbb{Z}^2$ \cite{BK,Mc}.  Quasi-isometric finitely generated groups that are not bi-Lipschitz equivalent first appeared in \cite{Dy}. While those first examples were not finitely presented, examples of type $F_n$ for each $n$ appear in \cite{DPT}.

Existing results about almost-isometries primarily concern equivalence classes of metrics on a fixed space, where two metrics are equivalent when the \textit{identity map} is an almost-isometry.  For instance, pairs of $\mathbb{Z}^n$-equivariant metrics on $\mathbb{R}^n$ whose ratio tends to one as distances tend to infinity are equivalent by \cite{Bu}. Analogous results hold for metrics periodic under Gromov hyperbolic and Heisenberg groups \cite{Kr} or under toral relatively hyperbolic groups \cite{Fuj}.  The equivalence classes of left-invariant metrics on non-elementary Gromov hyperbolic groups are studied in \cite{Fur} where the \textit{Marked Length Spectrum (MLS) Rigidity Conjecture} is reformulated as follows.

\textit{Given negatively curved Riemannian metrics $g_0$ and $g_1$ on a compact manifold $M$, the identity map $(\tilde{M},\tilde g_0) \rightarrow (\tilde{M}, \tilde g_1)$ between the universal Riemannian coverings is an almost-isometry if and only if $(\tilde M,\tilde{g}_0)$ and $(\tilde M,\tilde{g}_1)$ are isometric}.  

In view of the resolution of the MLS Conjecture in dimension two \cite{Cr, Ot}, and expected validity in higher dimensions, the following question is quite natural.



\begin{ques}
If $g_0$ and $g_1$ are two negatively curved Riemannian metrics on a compact manifold $M$, can the Riemannian universal coverings $(\tilde{M}, \tilde g_0)$ and $(\tilde{M}, \tilde g_1)$ be almost-isometric without being isometric?
\end{ques}

In the above question, we allow non-identity and non-equivariant almost-isometries (thus generalizing Furman's reformulation
of the MLS conjecture).  Our focus in this paper is to show that under suitable rigidity hypotheses on the metrics $g_i$, the answer is ``no'' -- though in general the answer is ``yes'' (see \cite{LSvL} for some $2$-dimensional examples).

\begin{thm}\label{theorem-AI+MLS}
Let $G$ be a group acting geometrically on a proper CAT(-1) space $X$ (distinct from $\mathbb{R}$), and let $Y$ be another proper CAT(-1) space having the geodesic extension property and connected spaces of directions. Assume that:
\begin{itemize}
\item $Y$ is almost-isometrically rigid, and
\item the $G$-action on $X$ is marked length spectrum rigid.
\end{itemize}
Then $X$ and $Y$ are almost-isometric if and only if there is a coarsely onto isometric embedding of $X$ into $Y$.
\end{thm}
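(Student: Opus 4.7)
The backward implication is routine: a distance-preserving map with $R$-dense image is automatically a $(1,2R)$-almost-isometry. For the forward implication, let $f\colon X\to Y$ be an almost-isometry with quasi-inverse $\bar f\colon Y\to X$. The plan is to exploit AI-rigidity of $Y$ to convert the $G$-action on $X$ into a genuine isometric $G$-action on $Y$, verify that the two geometric actions share a marked length spectrum, and then invoke MLS-rigidity of $G\curvearrowright X$ to produce the desired coarsely onto isometric embedding.

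For each $g\in G$, the conjugate $f_g := f\circ g\circ\bar f\colon Y\to Y$ is an almost-isometry whose additive constant is controlled by the AI constants of $f$ and $\bar f$, uniformly in $g$. Applying AI-rigidity of $Y$ to $f_g$ produces an isometry $\phi(g)\in\mathrm{Isom}(Y)$ within bounded distance of $f_g$. This isometry is unique: the CAT($-1$) hypotheses on $Y$ (geodesic extension and connected spaces of directions) force any self-isometry of $Y$ of bounded displacement to be the identity. Uniqueness, combined with $\bar f\circ f\approx\mathrm{id}_X$, implies that $\phi\colon G\to\mathrm{Isom}(Y)$ is a group homomorphism. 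Its kernel consists of $g\in G$ whose orbits under $f$ move $Y$ by a bounded amount, equivalently whose orbits move $X$ by a bounded amount, so $\ker\phi$ is finite by properness of $G\curvearrowright X$ (and necessarily torsion). The image $\phi(G)$ acts geometrically on $Y$: cocompactness follows from coarse onto-ness of $f$ applied to a coarsely dense $G$-orbit in $X$, and proper discontinuity from properness of $G\curvearrowright X$.

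The heart of the argument is matching the marked length spectra of the two geometric actions. For a hyperbolic $g\in G$ with attracting boundary point $\xi^+\in\partial X$, the almost-isometry $f$ pulls back Busemann functions on $Y$ to Busemann functions on $X$ (with respect to $\partial f(\xi^+)$) up to an additive error controlled only by the AI constants. Combining this with the translation-length formulas
\[
\ell_X(g) = \beta^X_{\xi^+}(x) - \beta^X_{\xi^+}(gx), \qquad
\ell_Y(\phi(g)) = \beta^Y_{\eta^+}(y) - \beta^Y_{\eta^+}(\phi(g)y),
\]
and with the bounded-distance bound $d_Y(\phi(g)f(x),f(gx))\le C$, yields an inequality $|\ell_X(g) - \ell_Y(\phi(g))| \le C'$ with $C'$ independent of $g$. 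Since $\phi$ is a homomorphism, $\phi(g^n) = \phi(g)^n$ has translation length $n\ell_Y(\phi(g))$, so applying the bound to $g^n$ gives $|n\ell_X(g) - n\ell_Y(\phi(g))| \le C'$ for every $n\ge 1$; dividing by $n$ and letting $n\to\infty$ forces $\ell_X(g) = \ell_Y(\phi(g))$. Thus the geometric action $\phi(G)\curvearrowright Y$ has the same marked length spectrum as $G\curvearrowright X$ (trivially extending across the finite torsion kernel of $\phi$). The MLS-rigidity hypothesis on $G\curvearrowright X$ then supplies a $G$-equivariant isometric embedding $\psi\colon X\to Y$, and cocompactness of $\phi(G)\curvearrowright Y$ makes $\psi$ coarsely onto.

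The main obstacle is the upgrade from the approximate bound $|\ell_X(g) - \ell_Y(\phi(g))|\le C'$ to exact equality: this is the step where the multiplicative constant $1$ is essential, and where one must ensure $C'$ is uniform in $g$ so that the power trick succeeds. Secondary technical points are the uniqueness of the approximating isometry $\phi(g)$, which is exactly where the two geometric hypotheses on $Y$ are used, and the careful tracking of additive errors under iterated compositions of $f$, $\bar f$, and elements of $G$.
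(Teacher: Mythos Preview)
Your plan matches the paper's proof almost exactly: conjugate the $G$-action through the almost-isometry, use AI-rigidity of $Y$ to straighten to genuine isometries, verify the resulting $G$-action on $Y$ is geometric, match the marked length spectra via the ``power trick,'' and then invoke MLS-rigidity. Two small points are worth noting.

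First, your attribution of the geodesic-extension and connected-direction hypotheses on $Y$ is slightly off. You say they are used for \emph{uniqueness} of the approximating isometry $\phi(g)$; in fact uniqueness only needs $Y\neq\mathbb{R}$ (no nontrivial bounded isometry of a CAT($-1$) space other than $\mathbb{R}$). The more delicate role of those two hypotheses is to guarantee a \emph{uniform} bound $L=L(1,C)$ on $d_Y(f_g,\phi(g))$, independent of $g$: AI-rigidity alone says each $f_g$ is at \emph{some} finite distance from an isometry, but gives no control on that distance as $g$ varies. The paper extracts the uniform $L$ from the Pansu-type lemma (its Proposition~\ref{L}), which requires exactly geodesic extension and connected spaces of directions. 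This uniformity is what makes both the properness check and your power trick go through, so it is the genuine reason those hypotheses appear.

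Second, your Busemann-function computation for the length comparison works, but the paper's route is more direct: since $\phi$ is a homomorphism and $d_Y(\phi(h)f(x),f(hx))\le C$ holds for \emph{every} $h\in G$ with the same $C$, one may simply apply this with $h=g^n$ to get $|d_X(x,g^nx)-d_Y(f(x),\phi(g)^nf(x))|\le 2C$, divide by $n$, and use the stable-length formula $\tau(g)=\lim_n d(x,g^nx)/n$. No boundary or horofunction analysis is needed. (Also, since the $G$-action on $X$ is free, $G$ is torsion-free and $\ker\phi$ is in fact trivial, so the ``finite torsion kernel'' you mention does not arise.)
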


The rigidity properties required of the space $Y$ and the $G$-space $X$ are defined in Section \ref{section-preliminaries}.  Note that we are not assuming a $G$-action on $Y$ (in particular, there is no equivariance assumption on the almost-isometry).  As a concrete application of the methods behind Theorem \ref{theorem-AI+MLS}, we mention the following:


\begin{cor}\label{symmetric}
Let $(M, g_0)$ be a closed locally symmetric space modeled on quaternionic hyperbolic space, or on the Cayley hyperbolic plane, and let $g_1$ be a negatively curved Riemannian metric on $M$.  Then $(\tilde M, \tilde g_0)$ and $(\tilde M, \tilde g_1)$ are almost-isometric if and only if $(M,g_0)$ and $(M,g_1)$ are isometric.
\end{cor}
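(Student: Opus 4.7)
The ``if'' direction will be immediate: a Riemannian isometry $(M,g_0)\to(M,g_1)$ lifts to a Riemannian, hence almost-, isometry of universal covers. For the converse, suppose $(\tilde M,\tilde g_0)$ and $(\tilde M,\tilde g_1)$ are almost-isometric. My plan is to set up the data so that Theorem \ref{theorem-AI+MLS} applies with $X=(\tilde M,\tilde g_0)$, $Y=(\tilde M,\tilde g_1)$, and $G=\pi_1(M)$ acting by deck transformations on both universal covers.

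First I would verify the structural hypotheses. After a common rescaling of $g_0$ and $g_1$ (which preserves almost-isometry up to a scaling of the additive constant), both Hadamard manifolds become proper CAT$(-1)$: the sectional curvature of $g_0$ is pinched in $[-4,-1]$ at the standard scale, and the pinched negative curvature of $g_1$ provided by compactness of $M$ makes $(\tilde M,\tilde g_1)$ CAT$(-1)$ once the scale is adjusted. The remaining structural hypotheses are then automatic: $X\ne\mathbb R$ because $\dim_{\mathbb R}X\ge 4$; $Y$ is a Hadamard manifold and so has spherical links (connected spaces of directions) and complete geodesics (the geodesic extension property); and $G$ acts cocompactly by isometries on $X$.

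Next I would verify the two rigidity hypotheses. For the marked length spectrum rigidity of the $G$-action on $X$, I would appeal to Hamenst\"adt's MLS rigidity theorem for cocompact lattices in rank-one symmetric spaces of noncompact type; the key ingredient in the quaternionic and octonionic cases is Pansu's quasiconformal rigidity of the visual boundary. Almost-isometric rigidity of $Y$ is the step I expect to be the main obstacle: it must be deduced from whichever statement the paper supplies (presumably set up in Section \ref{section-preliminaries}) guaranteeing that a proper CAT$(-1)$ Hadamard manifold admitting a cocompact isometric group action is almost-isometrically rigid in the precise sense used by the authors.

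With the hypotheses in place, Theorem \ref{theorem-AI+MLS} produces a coarsely onto isometric embedding $\phi\colon X\to Y$. Since $X$ and $Y$ are Hadamard $n$-manifolds of equal dimension, invariance of domain shows $\phi(X)$ is open in $Y$, while completeness of $X$ transported by the distance-preserving $\phi$ shows $\phi(X)$ is closed; connectedness of $Y$ then forces $\phi$ to be surjective, and Myers--Steenrod promotes $\phi$ to a global Riemannian isometry. Thus $(\tilde M,\tilde g_1)$ is Riemannian-isometric to the rank-one symmetric space $(\tilde M,\tilde g_0)$, so $g_1$ is locally symmetric of the same type as $g_0$, and Mostow strong rigidity (applicable since neither model is $\mathbb H^2_{\mathbb R}$) concludes with an isometry $(M,g_0)\cong(M,g_1)$.
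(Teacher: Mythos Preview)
Your assignment of roles is reversed from the paper's, and this is a genuine gap, not a cosmetic one. You set $X=(\tilde M,\tilde g_0)$ (symmetric) and $Y=(\tilde M,\tilde g_1)$ (exotic), which forces you to claim that an \emph{arbitrary} negatively curved Hadamard manifold with a cocompact isometry group is almost-isometrically rigid. No such statement is supplied in Section~\ref{section-preliminaries}; on the contrary, the paper proves that $\mathbb H^n$ is \emph{not} AI-rigid, and Example~\ref{perturb-hyperbolic-plane} exhibits a cocompact-quotientable negatively curved surface that is not AI-rigid. So the hypothesis you need on $Y$ is simply false in general.

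The paper instead sets $Y=(\tilde M,\tilde g_0)$ and $X=(\tilde M,\tilde g_1)$, so that AI-rigidity of $Y$ comes from Pansu's theorem for $\mathbb O\mathbb H^n$ and $\mathrm{Ca}\mathbb H^2$. But note that this still does not allow a black-box application of Theorem~\ref{theorem-AI+MLS}, because MLS-rigidity of the \emph{exotic} metric $X$ is not known. The paper therefore follows the \emph{proof} of Theorem~\ref{theorem-AI+MLS} rather than its statement: after using AI-rigidity to produce a geometric $G$-action on the symmetric space $Y$ with the same translation lengths (Lemma~\ref{same-MLS}), it applies MLS-rigidity to the $G$-action on $Y$ (Hamenst\"adt together with Besson--Courtois--Gallot, within the class of negatively curved manifolds of the correct dimension), obtaining an isometric embedding $Y\hookrightarrow X$ --- the opposite direction from Theorem~\ref{theorem-AI+MLS}. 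Your concluding dimension/invariance-of-domain argument then applies verbatim. In short, both rigidity ingredients live on the symmetric-space side; your proposal places one of them on the exotic side, where it is unavailable.
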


The ideas behind Theorem \ref{theorem-AI+MLS} also yield some rigidity results for Fuchsian buildings (see Corollary \ref{Fuchsian-building}).  After discussing some preliminaries in Section \ref{section-preliminaries}, we prove Theorem \ref{theorem-AI+MLS} and its Corollaries in Section \ref{section-proof-of-main}. 

In Section \ref{section-proof-of-volgrowth}, we relate the presence of almost-isometries 
with dynamical invariants. Recall that the {\it upper volume entropy} of a complete Riemannian manifold $X$ is defined to be
$$h_{vol}^+(X):=\limsup _{r\to \infty} \frac{\ln Vol(B_p(r))}{r}$$
where $B_p(r)$ denotes the ball of radius $r$ centered at a chosen fixed basepoint $p$.  Similarly, the {\it lower volume entropy}
is defined to be 
$$h_{vol}^-(X):=\liminf _{r\to \infty} \frac{\ln Vol(B_p(r))}{r}.$$
These quantities are independent of the chosen point $p \in X$, and in the case where $X$ is a Riemannian cover of a 
compact manifold, one has that $h_{vol}^+(X) = h_{vol}^-(X)$ (see \cite{Ma}); this common value is then called the 
{\it volume entropy} of $X$, and is denoted $h_{vol}(X)$.  In general, the upper and lower volume entropies can differ, 
even for Riemannian covers of finite volume manifolds \cite{Na}.




\begin{thm}\label{theorem-volgrowth}
Let $M_1, M_2$ be complete Riemannian manifolds having bounded sectional curvatures.  If $M_1$ is almost-isometric to $M_2$, then $h_{vol}^+(M_1)=h_{vol}^+(M_2)$, and $h_{vol}^-(M_1)=h_{vol}^-(M_2)$. In particular, if the $M_i$ are Riemannian covers of compact
manifolds, then $h_{vol}(M_1)=h_{vol}(M_2)$.
\end{thm}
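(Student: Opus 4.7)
The plan is to transfer volume information through the almost-isometry using a net-packing argument, leveraging Bishop's and G\"unther's volume comparisons (both consequences of the bounded sectional curvature). Let $f\colon M_1 \to M_2$ be an almost-isometry with additive constant $C$ and coarse-surjectivity constant $D$, suppose $|K| \leq \kappa^2$ on both manifolds, fix a basepoint $p_1 \in M_1$, and set $p_2 := f(p_1)$. The goal is to produce constants $A, B > 0$, depending only on $C$, $\kappa$, and $\dim M_i$, such that
$$Vol\bigl(B_{p_1}(r)\bigr) \,\leq\, A \cdot Vol\bigl(B_{p_2}(r + B)\bigr)$$
for every $r > 0$. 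Applying the same estimate to a quasi-inverse of $f$ yields the reverse bound; dividing $\log Vol(\cdot)$ by $r$ and taking $\limsup$ (respectively $\liminf$) then gives the equality of upper (respectively lower) volume entropies.

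For the main inequality, fix a small $\rho > 0$ (less than $\pi/\kappa$ and less than the injectivity radius at the relevant points) and take a maximal $(C + 2\rho)$-separated subset $\{x_i\}_{i=1}^N \subset B_{p_1}(r)$. By maximality, $B_{p_1}(r) \subseteq \bigcup_i B_{x_i}(C + 2\rho)$, and Bishop's inequality ($\Ric \geq -(n-1)\kappa^2$) bounds each $Vol(B_{x_i}(C+2\rho))$ above by the model volume $v_1 := V^n_{-\kappa^2}(C + 2\rho)$; thus $Vol(B_{p_1}(r)) \leq N v_1$. On the target side, the almost-isometry condition gives $\dist(f(x_i), f(x_j)) \geq (C + 2\rho) - C = 2\rho$, so the balls $B_{f(x_i)}(\rho)$ are pairwise disjoint in $M_2$ and contained in $B_{p_2}(r + C + \rho)$. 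G\"unther's inequality ($\sec \leq \kappa^2$) in $M_2$ gives a lower bound $v_2 := V^n_{\kappa^2}(\rho) > 0$ on each such ball's volume, so $N v_2 \leq Vol(B_{p_2}(r + C + \rho))$. Combining the two bounds yields the desired comparison with $A = v_1/v_2$ and $B = C + \rho$.

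For the reverse direction, I would construct an almost-isometric quasi-inverse $g\colon M_2 \to M_1$ by selecting, for each $y \in M_2$, a point $g(y) \in M_1$ with $\dist(f(g(y)), y) \leq D$; a short calculation shows that $g$ is an almost-isometry with additive constant $2D + C$ that is $(D+C)$-coarsely onto, so the same packing argument applied to $g$ delivers $Vol(B_{p_2}(r)) \leq A' \cdot Vol(B_{p_1}(r + B'))$. Together the two inequalities force $h_{vol}^+(M_1) = h_{vol}^+(M_2)$ and $h_{vol}^-(M_1) = h_{vol}^-(M_2)$; in the compact-base case, \cite{Ma} collapses the upper and lower entropies into the single invariant $h_{vol}$. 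The main technical subtlety is the lower volume estimate in the G\"unther step, which requires a lower bound on the injectivity radius at each image point $f(x_i) \in M_2$. For Riemannian covers of a compact manifold this is automatic (the injectivity radius of the cover is bounded below by that of the base), which is precisely the setting of the last assertion; in full generality, controlling possible degeneration of the injectivity radius in parts of the $M_i$ is the main obstacle and would require either an argument showing that such degeneration is incompatible with the existence of an almost-isometry, or a refined estimate using a locally varying $\rho$.
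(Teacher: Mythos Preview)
Your argument is essentially the same as the paper's: a packing/covering comparison in which uniform upper and lower bounds on the volumes of balls of fixed radius convert the almost-isometry into an inequality between volumes of large balls with a bounded additive shift in the radius. The paper packages this via a counting function $N(s,r)$ (minimal number of $s$-balls needed to cover $B_p(r)$) and the identity $h_{vol}=\lim_{r\to\infty}\frac{\ln N(s,r)}{r}$, then shows $N_1(1,r)\geq N_2(1+2C,\,r-2C)$; you instead compare volumes directly via a maximal separated net. These are two phrasings of the same idea.

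Your honest flag about the G\"unther step is well placed: a uniform lower bound on $Vol(B(\rho))$ genuinely requires a lower bound on the injectivity radius, and bounded sectional curvature alone does not supply one (think of thin flat cylinders). The paper's proof invokes exactly the same lower bound (it needs ``$v_s>0$, the minimal volume of a ball of radius $s/2$'') and attributes it to Bishop and G\"unther without further comment, so the issue you raise is not a defect of your argument relative to the paper's --- it is a subtlety that both proofs share. In the compact-base case, which is the case used in all the applications, the injectivity-radius bound is automatic, exactly as you say.
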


In fact, the proof of Theorem \ref{theorem-volgrowth} only uses the property that $r$-balls in $M_i$ have volume uniformly bounded above and below by positive constants.  This property is a consequence of having bounded sectional curvatures by \cite{Bi,Gun}.


\begin{cor}\label{no-scale}
Let $(M, g)$ be a Riemannian cover of a compact manifold. If $h_{vol}(M)>0$, then for any positive $\lambda \neq 1$, the manifolds $(M, g)$ and $(M, \lambda g)$ are {\bf not} almost-isometric.
\end{cor}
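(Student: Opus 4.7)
The plan is to argue by contradiction using Theorem \ref{theorem-volgrowth}. The key observation is that rescaling a Riemannian metric by a positive constant has a controlled, explicit effect on the volume entropy, so if the entropy is nonzero we can distinguish $(M,g)$ from $(M,\lambda g)$ as almost-isometry classes.

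Concretely, I would first record how the relevant quantities transform under the rescaling $g \mapsto \lambda g$. In dimension $n$, distances are multiplied by $\sqrt{\lambda}$ and the Riemannian volume form is multiplied by $\lambda^{n/2}$. Thus if $p \in M$ and $B^g_p(s)$ denotes the $g$-ball of radius $s$, then the $\lambda g$-ball of radius $r$ about $p$ coincides set-theoretically with $B^g_p(r/\sqrt{\lambda})$, and
\[
\operatorname{Vol}_{\lambda g}\bigl(B^{\lambda g}_p(r)\bigr) \;=\; \lambda^{n/2}\,\operatorname{Vol}_g\bigl(B^g_p(r/\sqrt{\lambda})\bigr).
\]
Taking logarithms, dividing by $r$ and substituting $s=r/\sqrt{\lambda}$, the prefactor $\lambda^{n/2}$ contributes $0$ in the limit and one obtains
\[
h_{vol}(M, \lambda g) \;=\; \frac{1}{\sqrt{\lambda}}\, h_{vol}(M, g).
\]
(Here I use that $(M,g)$ is a Riemannian cover of a compact manifold, so that the upper and lower volume entropies coincide and the limit exists; the same applies to $(M,\lambda g)$ since the covering structure is unaffected by rescaling.)

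Now suppose for contradiction that $(M,g)$ and $(M,\lambda g)$ are almost-isometric for some $\lambda>0$ with $\lambda \ne 1$. Both metrics descend to the compact base, hence have bounded sectional curvatures (scaling by $\lambda$ multiplies sectional curvatures by $1/\lambda$, preserving boundedness). Theorem \ref{theorem-volgrowth} therefore applies and yields $h_{vol}(M,g) = h_{vol}(M,\lambda g)$. Combined with the identity above, this gives $h_{vol}(M,g) = \frac{1}{\sqrt{\lambda}}h_{vol}(M,g)$, i.e.\ $(\sqrt{\lambda}-1)h_{vol}(M,g)=0$, contradicting the hypotheses $h_{vol}(M)>0$ and $\lambda \ne 1$.

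There is essentially no hard step: the scaling law for volume entropy is immediate, and Theorem \ref{theorem-volgrowth} does all the real work. The only thing to be a little careful about is verifying that both rescaled spaces meet the hypotheses of Theorem \ref{theorem-volgrowth}, which as noted above follows automatically from the fact that they are Riemannian covers of a common compact base.
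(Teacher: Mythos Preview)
Your proof is correct and follows exactly the approach implicit in the paper: the corollary is stated immediately after Theorem \ref{theorem-volgrowth} without a written proof, precisely because it follows at once from that theorem together with the elementary scaling law $h_{vol}(M,\lambda g)=\lambda^{-1/2}h_{vol}(M,g)$ that you derive. There is nothing to add.
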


For a closed Riemannian manifold $(M,g)$, the volume growth entropy $h_{vol}$ of its universal covering and the topological entropy $h_{top}$ of its geodesic flow satisfy $h_{vol} \leq h_{top}$ \cite{Ma}.  Equality holds for metrics without conjugate points \cite{FM},  a class of metrics including the nonpositively curved metrics, but is in general strictly larger \cite{Gul}.

\begin{cor}\label{entropy}
Let $g_0$ and $g_1$ be conjugate point free Riemannian metrics on a closed manifold $M$.  If the universal coverings 
$(\tilde M, \tilde g_0)$ and $(\tilde M, \tilde g_1)$ are almost-isometric, then $h_{top}(g_0) = h_{top}(g_1)$.
\end{cor}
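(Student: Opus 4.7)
The plan is to chain together the three ingredients assembled in the paragraph preceding the statement: Theorem \ref{theorem-volgrowth}, the universal inequality $h_{vol} \leq h_{top}$, and the Freire--Mañé equality $h_{vol}=h_{top}$ in the conjugate-point-free setting. The almost-isometry hypothesis gets invoked only through Theorem \ref{theorem-volgrowth}; once the volume entropies of the universal covers are matched, the conjugate-point-free assumption forces the topological entropies of the geodesic flows to coincide.

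First I would verify the hypotheses of Theorem \ref{theorem-volgrowth}. Since $M$ is closed and each $g_i$ is smooth, the sectional curvatures of $g_i$ are continuous functions on a compact manifold and hence uniformly bounded. Sectional curvature being a local invariant, the lifted metrics $\tilde g_i$ on $\tilde M$ inherit the same (two-sided) bounds on sectional curvature. In particular, the complete Riemannian manifolds $(\tilde M,\tilde g_0)$ and $(\tilde M,\tilde g_1)$ have bounded sectional curvatures, so the standing hypothesis of Theorem \ref{theorem-volgrowth} is satisfied.

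Next I would apply Theorem \ref{theorem-volgrowth} to the almost-isometric pair $(\tilde M,\tilde g_0)$ and $(\tilde M,\tilde g_1)$. Because each $(\tilde M,\tilde g_i)$ is a Riemannian cover of the compact manifold $(M,g_i)$, the upper and lower volume entropies agree and define the volume entropy $h_{vol}(\tilde g_i)$. Theorem \ref{theorem-volgrowth} then yields
\[
h_{vol}(\tilde g_0)=h_{vol}(\tilde g_1).
\]

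To conclude, I would invoke the Freire--Mañé theorem: for a closed Riemannian manifold without conjugate points, the volume entropy of the universal cover equals the topological entropy of the geodesic flow. Since $g_0$ and $g_1$ are assumed conjugate-point free on the closed manifold $M$, we obtain $h_{top}(g_i)=h_{vol}(\tilde g_i)$ for $i=0,1$, and combining this with the displayed equality gives $h_{top}(g_0)=h_{top}(g_1)$. There is really no obstacle beyond the bookkeeping above; the corollary is a direct assembly of Theorem \ref{theorem-volgrowth} with the two cited entropy facts.
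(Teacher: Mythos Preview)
Your proposal is correct and matches the paper's approach: the paper does not give a separate proof of this corollary, but the argument is exactly the one indicated in the paragraph preceding it---combine Theorem~\ref{theorem-volgrowth} (equality of volume entropies for almost-isometric universal covers of closed manifolds) with the Freire--Ma\~n\'e result \cite{FM} that $h_{vol}=h_{top}$ for conjugate-point-free metrics. Your verification that the bounded-curvature hypothesis of Theorem~\ref{theorem-volgrowth} holds (via compactness of $M$) is a useful detail the paper leaves implicit.
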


As a final application of Theorem \ref{theorem-volgrowth}, we mention the following:

\begin{cor}\label{rigidity-results}
Let $M$ be a closed $n$-manifold equipped with Riemannian metrics $g_0$ and $g_1$ for which the universal coverings $(\tilde M, \tilde g_0)$ and $(\tilde M, \tilde g_1)$ are almost-isometric. Further assume that the metrics satisfy any 
of the following conditions:
\begin{enumerate}
\item $n=2$, $g_0$ is a flat metric, and $g_1$ is arbitrary, or
\item $n=2$, $g_0$ is a real hyperbolic metric, and $g_1$ satisfies $Vol(g_0)\geq Vol(g_1)$, or
\item $n\geq 3$, $g_0$ is a negatively curved locally symmetric metric, and $g_1$ satisfies $Vol(g_0)\geq Vol(g_1)$, or
\item $n\geq 5$, $g_0$ is an irreducible, higher rank, nonpositively curved locally symmetric metric, and $g_1$ is conformal 
to $g_0$ and satisfies $Vol(g_0) \geq Vol(g_1)$,
\item $n\geq 6$, $g_0$ is a locally symmetric metric modeled on a product of negatively curved symmetric spaces of dimension
$\geq 3$ (suitably normalized), and $g_1$ is any metric satisfying $Vol(g_0)\geq Vol(g_1)$.
\end{enumerate}
Then the universal covers $(\tilde M, \tilde g_0)$ and $(\tilde M, \tilde g_1)$ are isometric. In particular, $(M,g_0)$ is isometric to $(M, g_1)$ by Mostow rigidity in cases (3) - (5). 
\end{cor}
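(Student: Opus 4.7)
My plan is to reduce each of the five cases to the same skeleton: apply Theorem \ref{theorem-volgrowth} to extract $h_{vol}(\tilde g_0)=h_{vol}(\tilde g_1)$ from the assumed almost-isometry (the upper and lower entropies agreeing here because the $\tilde g_i$ are Riemannian covers of a compact manifold), and then combine this entropy equality with the volume comparison hypothesis and a known minimal-entropy rigidity theorem to obtain an isometry of the universal covers.

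For cases $(3)$--$(5)$ I would cite the Besson--Courtois--Gallot minimal-entropy theorem and its Connell--Farb extensions. In case $(3)$, Besson--Courtois--Gallot gives $h_{vol}(g_1)^n Vol(g_1)\geq h_{vol}(g_0)^n Vol(g_0)$, with equality forcing the universal Riemannian covers to be isometric via a homothety; the entropy equality from Theorem \ref{theorem-volgrowth} combined with $Vol(g_0)\geq Vol(g_1)$ forces the equality case and pins the homothety constant at $1$. Cases $(4)$ and $(5)$ follow from the analogous minimal-entropy inequalities of Connell--Farb for conformal deformations of irreducible higher-rank locally symmetric spaces and for products of negatively curved rank-one symmetric spaces, respectively. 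Mostow rigidity (or its product/higher-rank analogs) then promotes the isometry of universal covers to an isometry of the closed manifolds, as stated.

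For case $(2)$, the analogous input is Katok's surface entropy inequality $h_{vol}(g)^2 Vol(g)\geq -2\pi\chi(M)$, with equality only for metrics of constant negative curvature up to scaling. Since the hyperbolic $g_0$ realizes equality, the entropy equality together with $Vol(g_0)\geq Vol(g_1)$ forces $g_1$ into the equality case as well, hence a rescaled hyperbolic metric; entropy equality then fixes the scaling constant and the universal covers are isometric.

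The hard part will be case $(1)$, since every metric on $T^2$ automatically has $h_{vol}=0$ and so Theorem \ref{theorem-volgrowth} yields no information on its own. I would instead argue via the stable norm: by Burago's theorem, each $(\mathbb{R}^2,\tilde g_i)$ is almost-isometric to its stable norm $\|\cdot\|_{g_i}$ on $\mathbb{R}^2$, and composing these with the hypothesized almost-isometry shows the two stable norms are themselves almost-isometric. Since an almost-isometry between finite-dimensional normed spaces is an affine isometry, the two stable norms agree up to a linear isomorphism of $\mathbb{R}^2$; as $\tilde g_0$ equals its own stable norm and is a Euclidean norm, the stable norm of $\tilde g_1$ must also be Euclidean, whereupon a result of Burago--Ivanov forces $\tilde g_1$ itself to be flat and isometric to $\tilde g_0$.
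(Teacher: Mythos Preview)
Your skeleton matches the paper's proof: Theorem~\ref{theorem-volgrowth} gives $h_{vol}(\tilde g_0)=h_{vol}(\tilde g_1)$, then a minimal-entropy rigidity theorem together with the volume hypothesis forces the equality case, yielding $\tilde g_1=\lambda\tilde g_0$, and Corollary~\ref{no-scale} pins $\lambda=1$. Two corrections are needed, however.

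\emph{Case (1).} The final step citing Burago--Ivanov does not work as stated. Their theorem \cite{BI} says that Riemannian tori \emph{without conjugate points} are flat, but you have not established that $g_1$ is conjugate-point free; a Euclidean stable norm does not by itself give this. The paper closes the argument differently: once the stable norm $\|\cdot\|_1$ is shown to be Euclidean (hence smooth and strictly convex), Bangert's Theorem~5.3 \cite{Ba} implies that the periodic minimal geodesics of $(T^2,g_1)$ in each nontrivial free homotopy class foliate $T^2$, and then Innami \cite{In} shows such a foliation forces $g_1$ to be flat. Also, your assertion that ``an almost-isometry between finite-dimensional normed spaces is an affine isometry'' is not literally true (almost-isometries need not even be continuous); the paper extracts a genuine linear isometry between the norms by passing to asymptotic cones, which is the clean way to say what you intend.

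\emph{Case (4).} The minimal-entropy rigidity within a conformal class for irreducible higher-rank locally symmetric spaces is due to Knieper \cite{Kn}, not Connell--Farb. The paper uses Katok \cite{Ka} for (2), Besson--Courtois--Gallot \cite{BCG} for (3), Knieper \cite{Kn} for (4), and Connell--Farb \cite{CF} only for (5).
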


While the rigidity results Corollary \ref{symmetric} and Corollary \ref{rigidity-results} (3) both apply to locally symmetric metrics $g_0$ modeled on quaternionic hyperbolic space or on the Cayley hyperbolic plane, the former requires the metric $g_1$ to be negatively curved while the latter requires $g_1$ to have volume majorized by that of $g_0$.  

A discussion of rigidity results for almost-isometries between metric trees (Theorem \ref{general-volume-growth} and Corollary \ref{rigidity-tree}) appears at the end of Section \ref{section-proof-of-volgrowth}. Section \ref{section-concluding-remarks} concludes the paper with some remarks and open questions. 


\section{\bf Preliminaries}\label{section-preliminaries}

Throughout, parentheses are suppressed according to the following notational convention.  Given a function 
$\phi:X \rightarrow Y$ between sets and an element $x \in X$, the image $\phi (x) \in Y$ is frequently denoted by $\phi x$. Similarly, if  $\psi:Y\rightarrow Z$ is a function, the composite function 
$\psi \circ \phi:X \rightarrow Z$ is frequently denoted by $\psi \phi$.


\subsection{\bf Quasi-isometries and Almost-isometries}

\vskip 5pt
This subsection reviews basics concerning quasi-isometries.
\vskip 5pt
Let $(X,d_X)$ and $(Y,d_Y)$ be metric spaces.  Given constants $K\geq 1$ and $C\geq 0$, a map $\phi:X\rightarrow Y$ is $(K,C)$-\textit{quasi-isometric} if for every $x_1,x_2 \in X$,
\begin{equation}\label{quasiisometric}
(1/K)d_X(x_1,x_2)-C \leq d_Y(\phi x_1,\phi x_2)\leq Kd_X(x_1,x_2)+C.
\end{equation}
The map $\phi:X \rightarrow Y$ is $C$-\textit{coarsely onto} if for each $y \in Y$ there exists $x \in X$ with
\begin{equation}\label{coarseonto}
 d_Y(\phi x,y)\leq C.
 \end{equation}
The map $\phi :X \rightarrow Y$ is a $(K,C)$-\textit{quasi-isometry} when it is both $(K,C)$-quasi-isometric and $C$-coarsely onto.  The spaces $X$ and $Y$ are \textit{quasi-isometric} when such a quasi-isometry exists. Note that when $C=0$, quasi-isometries reduce to bi-Lipschitz maps.

A \textit{coarse inverse} to a $(K,C)$-quasi-isometric map $\phi :X \rightarrow Y$ is a $(K,C)$-quasi-isometric map 
$\psi :Y \rightarrow X$ satisfying 
\begin{equation}\label{inverse}
d_X\big((\psi \phi) x,x\big)\leq C\,\,\,\,\,\,\,\,\,\,\,\,\,\,\,\,\,\,\,\,\,\,\, d_Y\big((\phi \psi) y,y\big)\leq C
\end{equation}
for every $x\in X$ and for every $y\in Y$.  If $\phi :X \rightarrow Y$ is $(K,C)$-quasi-isometric with a coarse inverse 
$\psi :Y \rightarrow X$ then (\ref{inverse}) implies (\ref{coarseonto}) for both $\phi$ and $\psi$ so that they are 
both $(K,C)$-quasi-isometries.  

Conversely, given a $(K,C)$-quasi-isometry $\phi :X \rightarrow Y$, one uses (\ref{coarseonto}) to define a map $\psi :Y \rightarrow X$ satisfying
\begin{equation}\label{Gdef}
d_Y\big((\phi \psi) y,y\big)\leq C
\end{equation}
for each $y \in Y$.  The triangle inequality, (\ref{quasiisometric}), and (\ref{Gdef}) imply that for each $y_1,y_2 \in Y$ and $x \in X$,
\begin{equation}
(1/K)d_Y(y_1,y_2)-(3C/K) \leq d_X(\psi y_1,\psi y_2)\leq Kd_Y(y_1,y_2)+3KC,
\end{equation}
\begin{equation}
d_X\big((\psi \phi) x,x\big)\leq 2KC.
\end{equation}
Therefore $\psi :Y \rightarrow X$ is $(K,3KC)$-quasi-isometric, and letting $\bar{C}=3KC$, 
both $\phi$ and $\psi$ are $(K,\bar{C})$-quasi-isometric and coarse inverses of each other.

Define two maps $f, g:X \rightarrow Y$ to be equivalent if $\sup_{x\in X} d_Y(fx, gx \big)<\infty$, and denote this equivalence relation by $f\sim g$.  The discussion in the previous paragraph is summarized by: given any quasi-isometry $\phi : X\rightarrow Y$, there exists a quasi-isometry $\psi: Y\rightarrow X$ with the property that $\phi \psi \sim \Id_X$ and $\psi \phi \sim \Id_Y$.  Equivalence classes of self quasi-isometries of $X$ form a group, denoted by $QI(X)$. Quasi-isometries $\phi:X\rightarrow Y$  between spaces induce isomorphisms $QI(X)\cong QI(Y)$.

\vskip 10pt

In the special case where $K=1$, we replace the adjective quasi- with almost- throughout. In particular, the discussion 
above yields the following 

\begin{lem}\label{defB}
Let $\phi :X \rightarrow Y$ be a $(C/3)$-almost-isometry.  Then there exists a $C$-almost-isometry 
$\psi :Y \rightarrow X$ satisfying 
$d_X\big((\psi \phi) x,x\big)\leq C$ and $d_Y\big((\phi \psi) y,y\big)\leq C$ 
for every $x \in X$ and $y \in Y$.
\end{lem}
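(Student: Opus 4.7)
The plan is to specialize the general quasi-isometry construction presented in the preceding paragraphs to the case $K = 1$, with $C_0 := C/3$ playing the role of the additive constant in the hypotheses. Thus we regard the $(C/3)$-almost-isometry $\phi$ as a $(1, C_0)$-quasi-isometry, and construct $\psi$ by the same recipe used to build a coarse inverse to an arbitrary quasi-isometry.

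Concretely, for each $y \in Y$ we use the $C_0$-coarse surjectivity of $\phi$ to pick a point $\psi y \in X$ satisfying
\[
d_Y\bigl((\phi\psi)y,\,y\bigr) \leq C_0 = C/3,
\]
exactly as in (\ref{Gdef}). The triangle inequality and the two-sided bound (\ref{quasiisometric}) with $K=1$ then yield, for any $y_1, y_2 \in Y$,
\[
d_Y(y_1,y_2) - 2C_0 \;\leq\; d_X(\psi y_1,\psi y_2) \;\leq\; d_Y(y_1,y_2) + 2C_0,
\]
so $\psi$ is $(1, 2C_0)$-quasi-isometric, hence $(1, C)$-quasi-isometric since $2C_0 \leq C$. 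This is where the $3$ in the hypothesis $C/3$ is actually wasteful by comparison with the general case: for $K=1$ the constant degrades additively rather than multiplicatively, which is the clean payoff of specializing.

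Next, applying $\phi$ to $\psi\phi x$ and using that $\phi$ is an isometric embedding up to $C_0$ gives
\[
d_X\bigl((\psi\phi)x,\,x\bigr) \;\leq\; d_Y\bigl(\phi(\psi\phi x),\,\phi x\bigr) + C_0 \;\leq\; 2C_0 \;\leq\; C,
\]
using $d_Y(\phi\psi(\phi x),\phi x) \leq C_0$ for the first term. Combined with the displayed inequality $d_Y((\phi\psi)y,y) \leq C_0 \leq C$, this verifies both inverse estimates claimed in the lemma. Finally, the first inverse inequality shows that $\psi$ is $C$-coarsely onto (take $y = \phi x$ for any prescribed $x \in X$), so $\psi$ is a genuine $C$-almost-isometry and not merely $(1,C)$-quasi-isometric, completing the proof.

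There is essentially no obstacle here beyond constant-tracking; the content is entirely contained in the general argument preceding the lemma, and the only point requiring care is that when $K=1$ the factor $3KC$ in the general estimate collapses to an additive $2C_0$ (or $3C_0$, at worst), justifying the choice of $C/3$ in the hypothesis to land at the stated constant $C$ in the conclusion.
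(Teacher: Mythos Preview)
Your proof is essentially correct and follows exactly the paper's approach: the lemma is stated as the $K=1$ specialization of the general coarse-inverse construction in the preceding paragraphs, and you have simply unpacked that with $C_0=C/3$. One small arithmetic slip: the two-sided bound on $d_X(\psi y_1,\psi y_2)$ should be $\pm 3C_0$, not $\pm 2C_0$ (you use the triangle inequality twice for the two endpoints and once for the $C_0$ in (\ref{quasiisometric}), exactly as in the paper's general $3KC$), but since $3C_0=C$ your conclusion that $\psi$ is a $C$-almost-isometry is unaffected.
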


Since almost-isometries are special cases of quasi-isometries, the $\sim$ equivalence relation restricts to an equivalence
relation on almost-isometries. Compositions of almost-isometries are almost-isometries, and by Lemma 
\ref{defB}, coarse inverses of almost-isometries are almost-isometries. 
Therefore, equivalence classes of almost-isometries form a subgroup $AI(X)$ of $QI(X)$ with a canonical homomorphism $\textit{Isom}(X) \rightarrow AI(X)$.  Almost-isometries $\phi:X\rightarrow Y$ induce isomorphisms $AI(X) \cong AI(Y)$.

\begin{exmp}\label{example-EuclideanSpace}
If $X$ is a compact metric space, then any two maps have finite distance, so $QI(X)$ and $AI(X)$ are trivial. In contrast, $\textit{Isom}(X)$ can be quite non-trivial.  In particular, $\textit{Isom}(X) \rightarrow AI(X)$ need not be injective.

If $X=\mathbb R^n$ with the Euclidean metric, then $\textit{Isom}(X)=\mathbb{R}^n \rtimes O(n)$ where $O(n)=\{A \in GL(\mathbb{R}^n)\, \vert\, A^{T}A=\Id\}$ denotes the orthogonal group and $(v,A) \in \textit{Isom}(X)$ acts via $w \mapsto Aw+v$.  The natural homomorphism $\textit{Isom}(X) \rightarrow AI(X)$ has kernel given by the translations $\mathbb{R}^n$ and image isomorphic to $O(n)$.
\end{exmp}


\subsection{\bf (Quasi)-Isometries of CAT(-1) spaces.}

\vskip 5pt

This subsection summarizes the basic theory of isometries, quasi-isometries, and boundary maps of CAT(-1) spaces; the reader is referred to \cite[Chapter II.6]{BH} for more details. 

\vskip 5pt

Throughout $X$ denotes a CAT(-1) metric space that is {\it proper}: {\it all metric balls are 
compact}. A group $G$ acting on $X$ acts \textit{geometrically} provided the $G$-action on $X$ is isometric, proper, free, and cocompact. 

\subsubsection{ (Bounded) Isometries.\\}
For $I\in \textit{Isom}(X)$, the displacement function $d_{I}:X\rightarrow \mathbb{R}$ is defined by $d_I(x)=d(Ix,x)$.  The isometry $I$ is defined to be a \textit{bounded isometry} if $d_I$ is a bounded function.  The translation length of $I$, denoted by $\tau(I)$, is defined by $\tau(I)=\inf_{x \in X} d_I(x)$. The set of points where $d_I$ achieves its infimum is denoted $\Min(I)$. An isometry $I$ is 
{\it semi-simple} if $\Min(I) \neq \emptyset$. If $G$ acts geometrically on $X$, then every $g \in G$ acts via a semi-simple isometry. For a semi-simple isometry $I$,
\begin{equation}\label{formula}
\tau(I)=\lim_{n \rightarrow \infty} \frac{d(x,I^nx)}{n}
\end{equation}
where $x\in X$ is an arbitrary point \cite[II.6, Exercise 6.6(1)]{BH}. 

\begin{lem}\label{nobounded}
Let $X$ be a $CAT(-1)$ space, not isometric to $\mathbb R$. Then $X$ has no nontrivial bounded isometries. \end{lem}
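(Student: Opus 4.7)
My plan is to assume $I \ne \mathrm{Id}_X$ is a bounded isometry of $X$ and derive $X \cong \mathbb{R}$, contradicting the hypothesis.

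The first step is to show that $I$ preserves every bi-infinite geodesic of $X$. The displacement function $d_I$ is convex on $X$ (a standard CAT($0$) fact), and bounded by hypothesis. Restricted to any bi-infinite geodesic $\ell \colon \mathbb{R} \to X$, the function $d_I \circ \ell$ is a bounded convex function on $\mathbb{R}$, hence constant (a non-constant convex function on $\mathbb{R}$ grows linearly on at least one end). But in a CAT($-1$) space, $d_I \circ \ell$ is strictly convex unless $\ell$ and $I\ell$ bound a flat strip, and flat strips are incompatible with curvature bounded above by $-1$. It follows that $I\ell = \ell$, so $I$ restricted to $\ell$ is an isometry of $\mathbb{R}$ with constant displacement, i.e., a translation by some length $\tau(\ell) \ge 0$.

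The second step promotes this to a global statement. Using the richness of bi-infinite geodesics of $X$ (implicit in the paper's standing framework, e.g., via geodesic extension), any two points of $X$ can be connected through bi-infinite geodesics in a way that lets one propagate equalities of translation lengths: two bi-infinite geodesics $\ell_1, \ell_2$ meeting at a point $p$ both give $d_I(p) = \tau(\ell_1) = \tau(\ell_2)$, so a connectedness argument forces $\tau(\ell)$ to be independent of $\ell$, and $d_I \equiv \tau$ on all of $X$. Thus $I$ is a Clifford translation.

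The final step invokes the standard splitting theorem for proper CAT($0$) spaces admitting a non-trivial Clifford translation: such a space decomposes isometrically as $\mathbb{R} \times X'$, with $I$ acting by translation on the first factor. A non-trivial $\mathbb{R}$-factor creates flat $2$-dimensional sub-regions, which are incompatible with the CAT($-1$) condition. So $X'$ is a single point and $X \cong \mathbb{R}$, as required for the contradiction.

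The main obstacle is the second step, bridging from local information (translation on each bi-infinite geodesic) to the global assertion that $d_I$ is constant on $X$. This hinges on $X$ having enough bi-infinite geodesics arranged in a connected way, a mild geometric richness that is implicit in the paper's framework and without which the lemma can fail (e.g.\ on a horoball, where a non-trivial parabolic translation has bounded displacement).
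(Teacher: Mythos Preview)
Your argument is correct under the geodesic-richness hypothesis you flag, and the endgame --- a Clifford translation forces a product splitting $X \cong \mathbb{R} \times X'$, which in CAT($-1$) collapses to $X \cong \mathbb{R}$ --- is exactly the paper's. The paper's route is more direct: it asserts in one line that $d_I$, being convex \cite[II.6, Proposition 6.2(3)]{BH} and bounded, must be constant on all of $X$, so $\Min(I) = X$; then the structure theorem \cite[II.6, Theorem 6.8(4)]{BH} splits $\Min(I) = Y \times \mathbb{R}$ and the contradiction follows. In particular the paper skips your Step~1 entirely --- once $d_I$ is globally constant the splitting theorem applies immediately, with no need to first show that $I$ preserves each bi-infinite geodesic. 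Your Step~2, on the other hand, is the honest content behind the paper's terse ``bounded and convex, hence constant'': as your horoball example shows, that implication fails for arbitrary CAT($-1$) spaces, and your connectedness-through-geodesics argument is precisely the justification needed (valid once one has geodesic extension). So your proof is longer, partly from an unnecessary detour (Step~1) and partly from being more explicit about a point the paper glosses over (Step~2).
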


\begin{proof}
Assume that $I \in \Isom(X)$ is bounded.  Then the displacement function $d_I$ is bounded and convex \cite[II.6, Proposition 6.2(3)]{BH}, hence constant. In particular, $\Min(I)=X$.

If this constant is positive, then $\Min(I)$ splits isometrically as a metric product $Y\times \mathbb R$, for some convex subset $Y\subset X$ \cite[II.6, Theorem 6.8(4)]{BH}. $Y$ cannot consist of a single point (since $X$ is not isometric to $\mathbb R$), nor can it have more than one point (for otherwise, $X$ contains an isometric copy of $[0, \epsilon]\times \mathbb R$, so is not CAT(-1)). Conclude that $d_I \equiv 0$ and that $I=\Id_X.$ 
\end{proof}

\begin{cor}\label{bdd-distance-coincide}
Let $X$ be a $CAT(-1)$ space, not isometric to $\mathbb R$, and let $g,h\in \textit{Isom}(X)$. 
If $\sup_{x\in X}d\big(gx, hx\big)<\infty$, then $g=h$. In particular,
for such spaces, the natural map $\textit{Isom}(X) \rightarrow AI(X)$ is injective.

\end{cor}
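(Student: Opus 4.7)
The plan is to reduce the statement to Lemma \ref{nobounded} by passing to the composition $h^{-1}g$. Given $g, h \in \textit{Isom}(X)$ with $\sup_{x \in X} d(gx, hx) < \infty$, I would consider the isometry $I := h^{-1}g$. Since $h^{-1}$ is an isometry,
\begin{equation*}
d(Ix, x) = d(h^{-1}gx, h^{-1}hx) = d(gx, hx),
\end{equation*}
so the displacement function $d_I$ is bounded, i.e., $I$ is a bounded isometry of $X$. Since $X$ is CAT(-1) and not isometric to $\mathbb{R}$, Lemma \ref{nobounded} forces $I = \Id_X$, and hence $g = h$.

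For the injectivity of $\textit{Isom}(X) \to AI(X)$, note that two isometries $g, h$ have the same image in $AI(X)$ precisely when $g \sim h$ as self-maps of $X$, which by definition means $\sup_{x \in X} d(gx, hx) < \infty$. The first part of the corollary then gives $g = h$, proving that the homomorphism has trivial kernel.

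The only substantive content is the first step, and that is essentially a bookkeeping reduction — the real work has already been done in Lemma \ref{nobounded}, where the CAT(-1) hypothesis (and the exclusion of $\mathbb{R}$) was used to rule out nontrivial bounded isometries via the product-splitting theorem for $\Min(I)$. Once that lemma is in hand there is no obstacle; the only thing to be careful about is using $h^{-1}$ (rather than $gh^{-1}$) so that the displacement of $h^{-1}g$ at $x$ matches $d(gx,hx)$ on the nose, which makes the boundedness transfer immediate.
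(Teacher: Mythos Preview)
Your argument is correct and matches the paper's own proof, which simply says ``Apply Lemma \ref{nobounded} to $gh^{-1}$.'' Your use of $h^{-1}g$ rather than $gh^{-1}$ is an inessential variation (with $gh^{-1}$ one substitutes $y=h^{-1}x$ to get $d_I(x)=d(gy,hy)$), so your closing caveat is harmless but unnecessary.
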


\begin{proof}
Apply Lemma \ref{nobounded} to $gh^{-1}$.
\end{proof}

\subsubsection{ Boundary structure of $X$.\\}

\vskip 5pt

The boundary $\partial X$ of $X$ consists of the set of equivalence classes of geodesic
rays in $X$, where two rays are equivalent if they are at bounded Hausdorff distance. There is a natural topology on $\partial X$, where two geodesic rays based at $x_0 \in X$ are close provided they stay close for a long period of time. A quasi-isometry $\phi: X\rightarrow Y$ induces a homeomorphism $\phi^\partial: \partial X \rightarrow \partial Y$.

A pair of maps $f, g:X \rightarrow X$ are at distance at most 
$L$ when $d(fx,gx)\leq L$ for every $x \in X$. The boundary at infinity detects whether maps are
at bounded distance apart. More precisely, we have the following well-known result:

\begin{prop}\label{L}
Let $(X,d)$ be a complete simply connected $CAT(-1)$ metric space, having the geodesic extension property, 
and with the property that the space of directions at each point is connected.  
For each $K\geq 1$ and $C>0$ there exists a constant $L:=L(K,C)>0$ with the following property: if $F$ is a 
$(K,C)$-quasi-isometry of $X$ and $I$ is an isometry of $X$ with boundary maps $ F^\partial\equiv I^\partial$, 
then $F$ and $I$ are at distance at most $L$.
\end{prop}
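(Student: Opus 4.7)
The plan is to reduce to the case $I = \Id_X$ and then combine the Morse stability of quasi-geodesics in $CAT(-1)$ spaces with a carefully chosen bi-infinite geodesic through the base point. First, replacing $F$ by $G := I^{-1} \circ F$, which is again a $(K,C)$-quasi-isometry since $I$ is an isometry, the hypothesis $F^\partial \equiv I^\partial$ becomes $G^\partial \equiv \Id_{\partial X}$, and the identity $d(Fx, Ix) = d(Gx, x)$ reduces the problem to producing $L = L(K,C)$ with $d(Gx, x) \leq L$ for every $x \in X$. Since a proper $CAT(-1)$ space is Gromov hyperbolic with a universal hyperbolicity constant, the Morse stability lemma supplies $R = R(K,C)$ such that every bi-infinite $(K,C)$-quasi-geodesic lies within Hausdorff distance $R$ of the bi-infinite geodesic sharing its endpoints in $\partial X$. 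Applied to $G \circ \gamma$ for an arbitrary bi-infinite geodesic $\gamma$ through $x$ (whose endpoints at infinity agree with those of $\gamma$ because $G^\partial = \Id$), this yields $d(Gx, \gamma) \leq R$.

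The technical heart is the following geometric lemma, whose proof uses both hypotheses on $X$: for any $x \neq y$ in $X$, there is a bi-infinite geodesic $\gamma$ through $x$ with $d(y, \gamma) = d(x, y)$. To build $\gamma$, let $v_y \in \Sigma_x$ be the initial direction of $[xy]$ at $x$; by geodesic extension the antipodal direction $-v_y$ (obtained by extending $[xy]$ backward through $x$) also lies in $\Sigma_x$. The continuous $1$-Lipschitz function $\angle_x(v_y, \cdot) \colon \Sigma_x \to [0, \pi]$ attains the values $0$ and $\pi$ at these antipodal directions, and its image is connected since $\Sigma_x$ is; the intermediate value theorem therefore supplies $v^* \in \Sigma_x$ with $\angle_x(v_y, v^*) = \pi/2$, and geodesic extension then gives a bi-infinite geodesic $\gamma$ through $x$ tangent to $v^*$. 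To check $d(y, \gamma) = d(x, y)$, let $p \in \gamma$ realize the minimum; the convex-projection property in $CAT(0)$ gives $\angle_p(y, x) \geq \pi/2$, and when $p \neq x$ the direction of $[xp]$ at $x$ is $\pm v^*$, which forces $\angle_x(y, p) \geq \pi/2$ using the triangle inequality $\angle_x(v_y, v^*) + \angle_x(v_y, -v^*) \geq \angle_x(v^*, -v^*) = \pi$ in $\Sigma_x$. The $CAT(0)$ angle-sum bound $\angle_x + \angle_y + \angle_p \leq \pi$ then forces $\angle_y(x, p) = 0$, placing either $x$ on $[yp]$ (contradicting $d(y, p) \leq d(y, x)$) or $p$ on $[yx]$ (making $[xp]$ a common subsegment of $\gamma$ and $[xy]$, so $v^* \in \{v_y, -v_y\}$, contradicting $\angle_x(v_y, v^*) = \pi/2$). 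Hence $p = x$ and $d(y, \gamma) = d(x, y)$.

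Combining the two ingredients finishes the proof: for each $x$ with $Gx \neq x$, applying the geometric lemma with $y = Gx$ produces a bi-infinite geodesic $\gamma$ through $x$ satisfying $d(x, Gx) = d(Gx, \gamma) \leq R$, so $L := R(K, C)$ works. The step I expect to require the most care is the geometric lemma itself: the intermediate value argument must be phrased in terms of the image of the continuous angle function (so that only connectedness of $\Sigma_x$, not path-connectedness, is invoked), and the closing dichotomy requires a clean $CAT(-1)$ angle-sum argument together with the antipodality of directions along $\gamma$ to rule out $p \neq x$.
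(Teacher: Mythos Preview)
Your proof is correct and follows essentially the same strategy as the paper's: reduce to $I=\Id_X$, use connectedness of the space of directions to find a direction at angle $\pi/2$ from the segment $[x,Gx]$, extend it to a bi-infinite geodesic $\gamma$ on which $x$ is the nearest-point projection of $Gx$, and then invoke Morse stability of quasi-geodesics. The only difference is cosmetic: the paper verifies directly that the angle criterion for nearest-point projection holds at $x$, whereas you argue by contradiction via the CAT(0) angle-sum inequality; both arguments are valid and equally short.
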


We were unable to locate a proof in the literature, other than in the special case where $(X,d)$ is a negatively 
curved Riemannian manifold (which was shown by  Pansu \cite[Lemma 9.11, pg. 39]{Pa}). For the convenience
of the reader, we provide a proof which closely follows Pansu's Riemannian argument.

\begin{proof}
Without loss of generality, we may assume that $I=\text{Id}_X$. 
Let $p\in X$ be an arbitrary point, and consider the geodesic segment $\eta$ from $p$ to $F(p)$, whose length
we would like to uniformly control. The segment defines
a point $x_-$ in the space of directions $S_p$ at the point $p$. From the geodesic extension property, we can
extend this geodesic beyond $p$, which defines a second point $x_+$ on $S_p$. In terms of the Alexandrov 
angular metric 
$\angle_p$ on the space of directions $S_p$ (see \cite[Definition II.3.18, pg. 190]{BH}), we have that 
$\angle_p(x_+, x_-) = \pi$ (as they correspond to a geodesic through $p$). 
Since the space of directions $S_p$ is connected, continuity now implies the existence of a point $y_-\in S_x$ with the
property that $\angle_p(x_+, y_-) = \pi/2 = \angle_p(x_-, y_-)$. Let $\gamma$ be a geodesic segment terminating on $p$, and 
representing $y_-$. By the geodesic extension property, we can extend $\gamma$ to a bi-infinite geodesic $\hat \gamma$. 
The continuation of $\gamma$ defines a second point $y_+\in S_p$; again, we have $\angle_p(y_+, y_-)=\pi$. 
We now claim that the point $p$ coincides with the projection point of $F(p)$ on the geodesic $\hat \gamma$.

To see this, recall that in a CAT(-1) space, there is uniqueness of the projection point $\rho(q)$ of a point $q$ onto
a closed convex subset $C$. Moreover, the point $\rho(q)$ is characterized by the following property:
the angle at $\rho(q)$ between the geodesic segment from $\rho(q)$ to $q$ and any other 
geodesic segment originating at $\rho(q)$ {\it in the set $C$} is at least $\pi/2$ (see e.g. \cite[Proposition II.2.4, pg. 176]{BH}). 
We apply this criterion to the 
convex set $\hat \gamma$, and the point $F(p)$. Locally near $p$ there are precisely two geodesics segments
in $\hat \gamma$, corresponding to the pair of directions $y_+, y_- \in S_p$. We already know that 
$\angle_p(y_-, x_-)=\pi/2$, so it suffices to verify that $\angle_p(y_+, x_-)\geq \pi/2$. But this is clear, for 
otherwise the triangle inequality would force a contradiction:
$$\pi=\angle_p(y_+, y_-) \leq \angle_p(y_+, x_-) + \angle_p(x_-, y_-)< \pi/2 + \pi/2.$$
So $p$ is indeed the closest point to $F(p)$ on the geodesic $\hat \gamma$.

Now apply the map $F$ to obtain the $(K, C)$-quasi-geodesic $F\circ \hat \gamma$. From the stability theorem for
quasi-geodesics (see \cite[Theorem III.H.1.7, pg. 401]{BH}) 
there is a uniform constant $L:=L(K, C)$, depending only on the constants $K,C$ for the quasi-geodesic, with the property 
that $F\circ \hat \gamma$ is at Hausdorff distance $\leq L$ from the geodesic with same endpoints on $\partial X$, which is
$\hat \gamma$. It follows that the point $F(p) \in F\circ \hat \gamma$ is at distance $\leq L$ from $\hat \gamma$. But from
the discussion in the previous paragraph, this implies $d(p, F(p))\leq L$, as desired.
\end{proof}

In fact, there is some additional {\it metric} structure on $\partial X$: fixing a basepoint $x\in X$, define the visual metric
$$d_{\partial X}(p, q) =e^{- (p|q)_x}$$
where $p, q\in \partial X$, and $(p|q)_x$ denotes the Gromov product of the pair of points with respect to the basepoint
$x$ (see \cite[Section 2.5]{Bou} for details). While the metric $d_{\partial X}$ 
depends on the choice of basepoint $x$, changing basepoints gives a bi-Lipschitz equivalent 
metrics, and hence
the {\it bi-Lipschitz class} of the metric $d_{\partial X}$ is well-defined. 

Fixing such metrics on $\partial X, \partial Y$,
the behavior of a quasi-isometry $\phi: X\rightarrow Y$ is closely related to the metric properties of the induced map
$\phi ^\partial : \partial X \rightarrow \partial Y$. Most relevant for our purposes is work of Bonk and Schramm, who 
showed that if $\phi$ is an almost-isometry, the $\phi^\partial$ is a bi-Lipschitz map \cite[proof of Theorem 6.5]{BS}, 
i.e. there is a constant $\lambda >1$ with
the property that for all $x,y\in \partial X$ we have:
$$\lambda ^{-1} \cdot d_{\partial X}(x, y)\leq d_{\partial Y}\left( \phi^\partial(x), \phi^\partial(y) \right)
\leq \lambda \cdot d_{\partial X}(x,y)$$
Conversely, if $\phi^\partial$ is a bi-Lipschitz map, then $\phi$ is at bounded distance from an almost-isometry \cite[Theorems 7.4 and 8.2]{BS}. In particular, boundary maps induce an isomorphism $AI(X) \cong Bilip(\partial X)$.

\subsection{\bf Rigidity statements.}
In the statement of our Main Theorem, our hypotheses involve some rigidity statements concerning the spaces $X$, $Y$. We define these rigidity statements in this subsection for the convenience of the reader.

\begin{defn}
A metric space $Y$ is \emph{quasi-isometrically rigid} (QI-rigid) if each quasi-isometry of $Y$ is at a 
bounded distance from an isometry of $Y$. In other words, the canonial homomorphism $ \textit{Isom} (Y) \rightarrow QI(Y)$ 
is surjective. A metric space is \emph{almost-isometrically rigid} (AI-rigid) if every almost-isometry of $Y$ is at bounded 
distance from an isometry. In other words, the canonical homomorphism $\textit{Isom} (Y)\rightarrow AI(Y)$ is surjective.
\end{defn}

A celebrated result of Pansu \cite{Pa} shows that quaternionic hyperbolic space $\mathbb O \mathbb H ^n$ 
(of real dimension $4n$) and the Cayley hyperbolic plane $\text{Ca}\mathbb H^2$ (of real dimension $16$) are both 
QI-rigid (and hence AI-rigid). In contrast, we have the following:

\begin{lem}
For any $n\geq 2$, $\mathbb H^n$ is {\bf not} AI-rigid. In other words, there exist almost-isometries 
$\phi: \mathbb H^n\rightarrow \mathbb H^n$ which are {\bf not} at bounded distance from any isometry.
\end{lem}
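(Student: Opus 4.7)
The plan is to reduce the problem to the analogous question about boundary maps via the Bonk--Schramm correspondence recalled just before the lemma. An almost-isometry $\phi:\mathbb H^n\to\mathbb H^n$ induces a bi-Lipschitz self-map $\phi^\partial$ of $\partial\mathbb H^n=S^{n-1}$ (with its visual metric, which is bi-Lipschitz equivalent to the round metric) by \cite[proof of Theorem 6.5]{BS}, and every bi-Lipschitz self-map of $S^{n-1}$ arises in this way up to bounded distance by \cite[Theorems 7.4 and 8.2]{BS}. Isometries of $\mathbb H^n$ induce M\"obius transformations on $S^{n-1}$, and two maps at bounded distance trivially induce the same boundary map. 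Hence it suffices to exhibit a bi-Lipschitz self-map $f$ of $S^{n-1}$ which is \emph{not} M\"obius: any almost-isometry $\phi$ with $\phi^\partial = f$ then cannot be at bounded distance from any isometry, since that would force $f$ itself to be M\"obius.

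The remaining task is to construct such an $f$. The key point is that the M\"obius group of $S^{n-1}$, isomorphic to $O(n,1)$, is finite-dimensional, while $Bilip(S^{n-1})$ contains the infinite-dimensional family of smooth self-diffeomorphisms of the sphere (all of which are automatically bi-Lipschitz by compactness of $S^{n-1}$). For $n=2$, a concrete choice is the map $e^{i\theta}\mapsto e^{i(\theta+\varepsilon\sin\theta)}$ for a small $\varepsilon>0$; non-M\"obiusness can be verified directly, using the fact that any M\"obius self-map of $S^1$ is determined by its values on three points. For $n\geq 3$, I would take a smooth diffeomorphism $f$ of $S^{n-1}$ equal to the identity outside a small coordinate ball $B$ but differing from the identity on $B$. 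Such an $f$ is trivially bi-Lipschitz, and cannot be M\"obius, because a M\"obius transformation of $S^{n-1}$ agreeing with the identity on a nonempty open set must be the identity.

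The only step requiring some care is the verification that the chosen perturbation genuinely fails to be M\"obius; I expect this to be the main (but still very mild) obstacle. For $n\geq 3$ this is essentially automatic from the rigidity of M\"obius transformations (they are determined by their restriction to any open subset of $S^{n-1}$), and for $n=2$ a finite check on three points suffices, using the explicit formulas for elements of $PGL(2,\mathbb R)$. Once $f$ is in hand, the boundary-correspondence machinery recalled earlier immediately produces the desired almost-isometry of $\mathbb H^n$ not at bounded distance from any isometry, completing the argument.
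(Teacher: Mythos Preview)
Your proposal is correct and follows essentially the same route as the paper: reduce to the boundary via the Bonk--Schramm correspondence, observe that isometries of $\mathbb H^n$ induce M\"obius (equivalently, conformal) maps on $S^{n-1}$, and then invoke the existence of bi-Lipschitz self-maps of $S^{n-1}$ that are not M\"obius. The only difference is that the paper simply asserts the existence of such maps as immediate, whereas you supply explicit constructions and verify non-M\"obiusness; this extra detail is fine but not needed.
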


\begin{proof}
From the discussion in the previous section, one can think of this entirely at the level of the metric structure on the
boundary at infinity. Choosing the disk model for $\mathbb H^n$ and the basepoint $x$ to be the origin, the metric 
$d_{\partial \mathbb H^n}$ on
$\partial \mathbb H^n = S^{n-1}$ is conformal to the standard (round) metric on the sphere -- in fact, 
$d_{\partial \mathbb H^n}(p, q)$ is half the (Euclidean) length of the (Euclidean) segment joining $p$ to $q$
(see \cite[Example 2.5.9]{Bou}). Recalling that almost-isometries induce bi-Lipschitz maps \cite[Theorem 6.5]{BS}, 
while isometries induce conformal maps \cite[Corollaire 2.6.3]{Bou}, the lemma follows immediately from the fact
that there exist bi-Lipschitz maps $\phi^\partial: S^{n-1}\rightarrow S^{n-1}$ which are not conformal.
\end{proof}

Similarly, one can show that complex hyperbolic space $\mathbb C \mathbb H ^n$ (of real dimension $2n$) 
is {\bf not} AI-rigid. Let us mention a few further examples.

\begin{exmp}\label{AI-rigid-line}
Consider $\mathbb R$ with the standard metric. From the discussion in Example \ref{example-EuclideanSpace}, we
have that the image of $\textit{Isom} (\mathbb R)$ inside $QI(\mathbb R)$ is a copy of $\mathbb Z_2$ (with non-trivial element
represented by the map $\sigma$ defined via $\sigma(x)=-x$). For any $\lambda >0$, 
the map $\mu_\lambda : x\mapsto \lambda x$ is a quasi-isometry, and if $\lambda \neq \lambda^{'}$, then $\mu _\lambda
\not \sim \mu_{\lambda ^{'}}$. So $QI(\mathbb R)$ at the very least contains the continuum many distinct elements 
$[\mu_\lambda]$, and the map $\textit{Isom} (\mathbb R) \rightarrow QI(\mathbb R)$ is far from being surjective.

On the other hand, assume $\phi :\mathbb R \rightarrow \mathbb R$ is a $C$-almost-isometry. Up to composing with 
$\sigma$, we may assume that $\phi$ preserves the two ends of $\mathbb R$, and up to composing with a translation, 
we may assume $\phi(0)=0$. Let us estimate the distance from $\phi x$ to $x$ for a generic $x\in \mathbb R$.
Firstly, if $x>0$ is sufficiently large, we have that $\phi x >0$ (since
$\phi$ preserves the ends of $\mathbb R$) and since $\phi$ is a $C$-almost-isometry,
$|\phi x - x| = ||\phi x - \phi 0| - |x -0| | \leq C$. An identical argument shows that if $x<0$ is sufficiently negative,
then $|\phi x - x| \leq C$. This leaves an $R$-neighborhood $B$ of the fixed point 
$0$ (for some $R$). But for $x\in B$, we know that $\phi x$ has distance at most $R+C$ from the origin, so the
triangle inequality gives $|x - \phi x| \leq 2R+C$. It follows that $\sup _{x\in \mathbb R} ( \phi x, x) \leq 2R + C$, 
and hence $\phi \sim \Id _{\mathbb R}$. This shows that every almost-isometry of $\mathbb R$ lies at finite distance 
from an isometry. Hence $\mathbb R$ is an example of an AI-rigid space which is {\it not} 
QI-rigid. 
\end{exmp}

\begin{exmp}\label{AI-rigid-Euclidean-plane}
As a somewhat more sophisticated example, consider now the case of $\mathbb R^2$ with a flat metric. We claim that 
$\mathbb R^2$ is an AI-rigid space, i.e. that every self almost-isometry is at bounded distance from an isometry. To see 
this, we start with $F\in AI(\mathbb R^2)$ arbitrary, and try to find a {\it standard form} almost-isometry at bounded distance
from $F$. Note that by composing with a translation, we may assume $F(0)=0$, and at 
the cost of a bounded perturbation, we can also assume that $F$ is continuous. We will find it convenient to work
in polar coordinates $(r, \theta)$. 

Since $F(0)=0$, we see that $F$ maps the circle $r = R$ into the annular region $R-C \leq r \leq R+C$. Performing
a radial projection of the image onto the circle of radius $R$ results in a new map at bounded distance from $F$ (hence
a new almost-isometry), which has the additional property that $F$ maps each circle about the origin to itself. So
without loss of generality, we may assume that $F$ has the form $F (r, \theta) = \left(r, f(r, \theta)\right)$ for some continuous 
function $f$; let $\alpha: \mathbb R^+\rightarrow \mathbb R$ denote the function $\alpha (r) := f(r, 0)$. 
Now consider the points $(r, 0)$ on the ray $\theta =0$, and observe that each of these gets sent to a point 
$\left(r, \alpha (r) \right)$. On the circle $S(R)$ of radius $r = R$ centered at the origin, the map $\phi$ is at bounded 
distance from the rotation by an angle $\alpha(R)$ -- moreover, the distance between the two maps is bounded 
independently of the radius $R$. It follows that the map $F$ is at bounded distance from the map $(r, \theta) \mapsto 
\left(r, \theta + \alpha(r)\right)$. 

Next, let us focus on properties of the map $\alpha$. The ray $\theta = 0$ maps under the almost-isometry $F$ to the
path $(r, \alpha (r))$. We now estimate the angle $\rho(s, t)$ ($s<t$) from the origin between the points
$(s, \alpha(s))$ and $(t, \alpha(t))$ 
-- which is obviously $\alpha(t) - \alpha(s)$ -- via the law of cosines:
$$\cos\big(\rho(s,t) \big):= \frac{s^2 + t^2 - ||(t, \alpha(t)) - (s, \alpha(s))||^2}{2st}$$
But since the map $F$ is a $K$-almost isometry, we have the estimate
$$t-s - K \leq ||(t, \alpha(t)) - (s, \alpha(s))||\leq t-s + K$$
which upon substitution gives the estimate
$$1- \frac{K^2 + 2K(t-s)}{2st} \leq \cos\big(\rho(s,t) \big) \leq 1- \frac{K^2 - 2K(t-s)}{2st}.$$
These bounds tend to $1$ as $s < t$ both tend to infinity. Moreover, for any $\epsilon >0$, we can find an 
$s_0$ with the property that for any $t>s_0$, the lower bound is at least $1-\epsilon$. 
This implies that $\alpha(r)$ has a limit. Let $\alpha_\infty$ denote the limit $\lim _{r\to \infty}\alpha(r)$, and observe that, at the cost of composing
with a rotation by $-\alpha_\infty$, we may as well assume that $\lim _{r \to \infty} \alpha(r) = 0$.
So we have reduced the problem to the following special case: let $F:(r, \theta) \mapsto \left(r, \theta + \alpha(r)\right)$ be
a $K$-almost-isometry, where $\alpha: \mathbb R^+ \rightarrow \mathbb R$ is a continuous map with 
$\lim _{r \to \infty} \alpha(r) = 0$. We need to show that this map $F$ is at bounded distance from the identity map -- it is sufficient to prove that, for $r$ sufficiently large, $\alpha(r) \leq K^\prime / r$ (for some constant $K^\prime$).

Consider the pair of points $(r_1, 0)$ and $(r_2, \theta)$ on the plane, and their image under the 
$K$-almost-isometry. The distance between the two pairs of points is easily calculated from the law of cosines, and
the $K$-almost-isometry condition gives the following estimate
$$\Big|\sqrt{r_1^2 + r_2^2 -2r_1r_2\cos(\theta + \Delta \alpha(r_1, r_2))} - \sqrt{r_1^2 + r_2^2 - 2r_1r_2\cos(\theta)} \Big|\leq K$$
which can be rewritten as
$$\Big|\sqrt{1 -\frac{2r_1r_2}{r_1^2 + r_2^2}\cos(\theta + \Delta \alpha(r_1, r_2))} - \sqrt{1 - \frac{2r_1r_2}{r_1^2 + r_2^2} \cos(\theta) } \Big|\leq \frac{K}{\sqrt{r_1^2 + r_2^2}}$$
where $\Delta\alpha$ is the difference function associated to $\alpha$, i.e. $\Delta\alpha(s,t) = \alpha(t)-\alpha(s)$. Now fix
a $0< \lambda <1$, and specialize the above equation to the case where $r_2 = r$ and $r_1=\lambda r$ ($r$ will be taken to
tend to infinity), and $\theta$ is a fixed
constant chosen so that $\sin(\theta) \neq 0$. We obtain:
$$\Big|\sqrt{1 -\frac{2\lambda}{1+\lambda^2}\cos(\theta + \phi(r))} - \sqrt{1 - \frac{2\lambda}{1+\lambda^2} \cos(\theta)} \Big|\leq \frac{K}{r\sqrt{1+\lambda ^2}}$$
where $\phi(r):= \Delta \alpha(\lambda r, r) = \alpha(r) -\alpha(\lambda r)$ tends to $0$ as $r\to \infty$. Using the sum-angle
formula for cosine, and a Taylor approximation for the terms involving $\phi(r)$, the left hand side can be rewritten as
$$\Big|\sqrt{1 -\frac{2\lambda}{1+\lambda^2}\cos(\theta) + \phi(r) \frac{2\lambda\sin(\theta)}{1+\lambda^2}
+ o(\phi(r))} - \sqrt{1 - \frac{2\lambda}{1+\lambda^2} \cos(\theta)} \Big|$$
Recalling that $\lambda, \theta$ are fixed, while $\phi(r) \to 0$ as $r\to \infty$, we can use a Taylor expansion for the
function $g(x) = \sqrt{a + x} \approx \sqrt{a} + x/2\sqrt{a} + o(x)$. Substituting in, the left hand side further reduces, and we
obtain
$$\Big| \phi(r) \left(\frac{\lambda \sin(\theta)}{(1+\lambda^2)\sqrt{1-2\lambda\cos(\theta)(1+\lambda^2)^{-1}}}\right) +o(\phi(r))\Big| \leq \frac{K}{r\sqrt{1+\lambda ^2}}$$
which gives us the asymptotic estimate $|\phi(r)| \leq K^{\prime \prime}/r$ (for $r$ sufficiently large), 
where $K^{\prime \prime}$ is a constant satisfying 
$$K^{\prime \prime} > K\frac{\sqrt{1+\lambda^2 -2\lambda\cos(\theta)}}{\lambda \sin(\theta)}.$$
Finally, recalling that $\phi(r) := \alpha(r) - \alpha(\lambda r)$, that $\lim _{s\to \infty} \alpha(s) =0$, and that $0< \lambda <1$,
we can use a telescoping sum to obtain the estimate:
$$|\alpha(r)| = \lim _{s\to \infty}|\alpha(s) - \alpha(r)|  \leq   \sum_{i=0}^\infty |\alpha(\lambda ^{-i-1} r) - \alpha(\lambda ^{-i} r)| \leq \sum_{i=0}^\infty 
\frac{K^{\prime \prime}}{\lambda^{-i} r} = \frac {K^{\prime \prime}}{r(1-\lambda)}.
$$
Since $K^{\prime \prime}, \lambda$ are fixed constants, this gives the desired asymptotic estimate on the rotation 
function $\alpha(r)$, completing the argument.
\end{exmp}

\begin{exmp}\label{perturb-hyperbolic-plane}
Consider $\mathbb H^2$ with the standard hyperbolic metric of constant curvature $-1$. Taking a compact set 
$K\subset \mathbb H^2$, perturb the metric slightly in the compact set $K$, and call the resulting Riemannian manifold $X$.
If the perturbation is small enough, $X$ will be negatively curved, and one can arrange for $\textit{Isom}(X)$ to
be trivial. 

Let $\phi: \mathbb H^2 \rightarrow X$ be the identity map, and note that $\phi$ is an almost-isometry from $\mathbb H^2
\rightarrow X$ (though there are no isometries from $\mathbb H^2$ to $X$). It follows that $AI(X) \cong AI(\mathbb H^2)$, and
we know from Lemma \ref{nobounded} that the map
$\textit{Isom}(\mathbb H^2)\hookrightarrow AI(\mathbb H^2)$ is injective. Hence the group $AI(X)$ contains a copy of 
$PSL(2, \mathbb R)$, and the map $\textit{Isom}(X)\rightarrow AI(X)$ fails to be surjective.
\end{exmp}

\begin{defn}
A complete CAT(-1) space $X$ equipped with a geometric $G$-action $\rho: G\rightarrow 
\textit{Isom}(X)$ is {\it marked length 
spectrum rigid} (MLS-rigid) provided: anytime we are given a complete CAT(-1) space $Y$ equipped with a geometric 
$G$-action $i: G\rightarrow \textit{Isom}(Y)$, and the translation lengths satisfy $\tau(\rho(g))=\tau(i(g))$ for each $g \in G$, then
there exists a $(\rho, i)$-equivariant isometric embedding $X\hookrightarrow Y$.
\end{defn}

\begin{rem}
When considering the MLS-rigidity question, one can also formulate versions where, rather than allowing an arbitrary
CAT(-1) space $Y$, one restricts to a certain subclass $\mathcal F$ of CAT(-1) spaces. In this case, we say that $X$ is 
MLS-rigid within the class $\mathcal F$. For instance, if $X$ is a 
negatively curved Riemannian manifold, it is reasonable to focus on the case where $Y$ is also a negatively curved 
Riemannian manifold. In this case, the conclusion forces the embedding to be surjective, and hence the equivariant
embedding is automatically an isometry from $X$ to $Y$. This is the context of the classical MLS Conjecture.
\end{rem}






\section{\bf Proof of Theorem \ref{theorem-AI+MLS} and applications}\label{section-proof-of-main}


Throughout this section, we assume that $X$ and $Y$ satisfy the hypotheses of Theorem \ref{theorem-AI+MLS}. 
Let us briefly sketch
out the main steps of the proof. First, we use the almost-isometry between $X$ and $Y$ to transfer the isometric 
$G$-action on $X$ to an almost-isometric $G$-action on $Y$. Using the property that $Y$ is almost-isometrically
rigid, one can straighten the almost-isometric $G$-action on $Y$ to a {\it genuine} isometric $G$-action on $Y$. We
then verify that this new isometric action on $Y$ is also geometric. Such a construction of a geometric $G$-action
on $Y$ is likely well-known -- we include the details for the convenience of the reader.
Now with respect to this new action on $Y$, one can construct an {\it equivariant} almost-isometry between $X$ and $Y$.
It is easy to check that these two actions have the same translation lengths, so from the marked length rigidity of $X$
we obtain the isometric embedding $X \hookrightarrow Y$. We now give the details of the proof.





\subsection{Pushing forward the action}

As $X$ and $Y$ are almost-isometric, there exists a $(C/3)$-almost-isometry $\phi :X \rightarrow Y$.  In particular 
$\phi$ is a $C$-almost-isometry.  By Lemma \ref{defB} there exists a $C$-almost-isometry coarse inverse 
$\psi :Y \rightarrow X$ satisfying   
$$d_X\big((\psi \phi) x,x\big)\leq C \,\,\,\,\,\,\,\,\,\,\,\,\,\,\,\,\,\,\,\,\,\,\, d_Y\big((\phi \psi) y,y\big)\leq C$$ 
for every $x \in X$ and $y \in Y$.

\vskip 10pt





Recall that $G<\Isom(X)$ acts properly discontinuously, freely, and cocompactly on $X$.  For $g\in G$ define the
map $\bar{g}:Y\rightarrow Y$ by $\bar{g}=\phi g \psi$. In other words, $\bar{g}$ is chosen
to make the following diagram commute:
$$
\xymatrix{
Y \ar[r]^{\psi} \ar[d]_{\bar{g}} & X \ar[d]^{g}\\
Y & X \ar[l]^{\phi} \\
}
$$

\begin{lem}\label{barg}
For each $g \in G$, $\bar{g}$ is a $3C$-almost-isometry of $Y$.
\end{lem}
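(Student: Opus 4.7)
My plan is to verify the two defining conditions of a $3C$-almost-isometry for $\bar g=\phi g\psi$ by chaining together the known additive errors of the three factors: $\phi$ contributes $C/3$, the isometry $g$ contributes nothing, and $\psi$ contributes $C$. Since these errors accumulate additively, the total error is well under $3C$, giving plenty of slack.

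For the distance inequality, given $y_1,y_2\in Y$ I would compute
\[
d_Y(\bar g y_1,\bar g y_2)=d_Y(\phi g\psi y_1,\phi g\psi y_2),
\]
applying first the $(C/3)$-almost-isometric property of $\phi$ to pass to distances in $X$ between $g\psi y_1$ and $g\psi y_2$, then using that $g$ is an isometry to remove $g$, and finally using the $C$-almost-isometric property of $\psi$ to pass back to distances in $Y$. This yields two-sided bounds of the form $d_Y(y_1,y_2)\pm(C+C/3)$, so $\bar g$ is even $(4C/3)$-almost-isometric.

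For the coarsely onto condition, given $y\in Y$ the natural candidate preimage is $y':=\phi g^{-1}\psi y$. To bound $d_Y(\bar gy',y)$, I would exploit the coarse inverse identity $d_X(\psi\phi x,x)\leq C$ with $x=g^{-1}\psi y$: applying the isometry $g$ shows $d_X(g\psi\phi g^{-1}\psi y,\psi y)\leq C$, and then the $(C/3)$-almost-isometry property of $\phi$ gives $d_Y(\bar g y',\phi\psi y)\leq C+C/3$. Combining with $d_Y(\phi\psi y,y)\leq C$ yields $d_Y(\bar g y',y)\leq 7C/3<3C$.

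The argument is essentially a bookkeeping exercise in propagating the additive constants, with no real obstacle — the only thing to watch is making sure that when passing through the isometry $g$ in the middle (and its inverse $g^{-1}$ when establishing coarse surjectivity), we correctly use that $g$ preserves distances in $X$ exactly, so it introduces no additional error.
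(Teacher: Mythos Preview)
Your argument is correct and follows essentially the same approach as the paper: both proofs chain the additive errors of $\phi$, $g$, and $\psi$ for the distance inequality, and both use $y'=\phi g^{-1}\psi y$ for coarse surjectivity. The only cosmetic differences are that the paper treats $\phi$ as a $C$-almost-isometry throughout (yielding bounds $2C$ and $3C$) while you retain the sharper $C/3$ constant (yielding $4C/3$ and $7C/3$), and for the coarsely onto step the paper pulls back to $X$ via $\psi$ and uses $\psi\phi\sim\Id_X$ twice, whereas you use $\psi\phi\sim\Id_X$ once in $X$, push forward via $\phi$, and finish with $\phi\psi\sim\Id_Y$.
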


\begin{proof}
Let $y_1,y_2 \in Y$.  We verify
\begin{eqnarray} d_Y(\bar{g}y_1,\bar{g}y_2)&=&d_Y\big((\phi g \psi) y_1 , (\phi g \psi) y_2\big)\nonumber\\
&\leq& d_X(g \psi y_1 , g \psi y_2)+C\nonumber\\
&=&d_X(\psi y_1 , \psi y_2)+C\nonumber\\
&\leq& d_Y(y_1,y_2)+2C\nonumber.  
\end{eqnarray} 


A symmetric argument shows that $d_Y(\bar{g}y_1,\bar{g}y_2) \geq d_Y(y_1,y_2)-2C$, giving us that
$$d_Y(y_1,y_2)-3C\leq d_Y(\bar{g}(y_1),\bar{g}(y_2))\leq d_Y(y_1,y_2)+3C.$$  

\noindent It remains to show that $\bar{g}$ is $3C$-coarsely onto.  For $y \in Y$, let $y'=\overline{g^{-1}} y = \phi g^{-1} \psi y$.  Then 
\begin{eqnarray}
d_Y(\bar{g}y',y)&=&d_Y\big((\phi g \psi) (\phi g^{-1} \psi y) , y\big)\nonumber\\
&\leq&d_X(\psi \phi g \psi \phi g^{-1} \psi y , \psi y)+C\nonumber\\
&\leq&d_X\big((\psi \phi)(g \psi \phi g^{-1} \psi y) , g \psi \phi g^{-1} \psi y\big) + d_X(g\psi \phi g^{-1} \psi y , \psi y)+C\nonumber\\
&\leq&d_X(g\psi \phi g^{-1} \psi y , \psi y)+2C\nonumber\\
&=&d_X\big((\psi \phi)(g^{-1} \psi y) , g^{-1} \psi y)+2C\nonumber\\
&\leq&3C.\nonumber
\end{eqnarray}
The first inequality comes from $\psi$ being a $C$-almost-isometry, the second is the triangle inequality, 
the third and fourth both come from $\psi \phi \sim \Id_X$. This completes the proof of the Lemma.
\end{proof}

As $Y$ is AI-rigid, Lemma \ref{barg} and Proposition \ref{L} yield a constant $L>0$ such that for 
each $g \in G$, there is a unique isometry $i(g) \in \Isom(Y)$ satisfying 
$$d_Y\big(\bar{g}y , i(g)y \big)\leq L$$ 
for every $y \in Y$. It is important to note that the constant $L$ is {\it independent} of the choice of element $g$
(this is used in the proof of Lemma \ref{propdisc}). This defines a map $i: G\rightarrow \Isom(Y)$. 

\begin{lem}\label{homomorphism}
The map $i:G \rightarrow \Isom(Y)$ is a homomorphism.
\end{lem}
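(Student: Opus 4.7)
The plan is to show directly that $i(g)i(h)$ and $i(gh)$ are at bounded distance apart in $Y$, and then invoke Corollary \ref{bdd-distance-coincide} to conclude they are equal. Since $Y$ is a CAT(-1) space distinct from $\mathbb R$ (it has the geodesic extension property with connected spaces of directions, and in any case the argument is driven by the fact that $X$ satisfies this), the rigidity statement that bounded-distance isometries coincide applies to $Y$.

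First I would compare $\overline{gh}$ with $\bar g \bar h$. By definition $\overline{gh} = \phi (gh) \psi$, while $\bar g \bar h = \phi g \psi \phi h \psi$. The ``unwanted'' inner factor is $\psi \phi$, which satisfies $\psi \phi \sim \mathrm{Id}_X$ with constant $C$. A short calculation using the fact that $\phi$ is a $C$-almost-isometry and that $g \in G$ acts isometrically gives
\begin{align*}
d_Y(\bar g \bar h\, y, \overline{gh}\, y) &= d_Y\big(\phi g (\psi \phi) h \psi y,\ \phi g h \psi y\big)\\
&\leq d_X\big(g (\psi \phi)(h \psi y),\ g(h \psi y)\big) + C\\
&= d_X\big((\psi \phi)(h \psi y),\ h \psi y\big) + C \ \leq\ 2C
\end{align*}
for every $y \in Y$, so $\bar g \bar h$ and $\overline{gh}$ are at distance at most $2C$.

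Next I would bound the distance between $i(g)i(h)$ and $\bar g \bar h$. Using the triangle inequality at a point $y \in Y$, together with the facts that $d_Y(i(g)z, \bar g z)\leq L$ for every $z \in Y$ (applied at $z = i(h)y$) and that $\bar g$ is a $3C$-almost-isometry (Lemma \ref{barg}), I get
\[
d_Y(i(g)i(h)y,\ \bar g \bar h y)\ \leq\ d_Y(i(g)i(h)y,\ \bar g i(h) y) + d_Y(\bar g i(h) y,\ \bar g \bar h y)\ \leq\ L + (L + 3C).
\]
Combining the two estimates yields $d_Y(i(g)i(h)y,\ \overline{gh}\, y)\leq 2L + 5C$ for every $y \in Y$. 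Since $i(gh)$ is by definition at distance at most $L$ from $\overline{gh}$ uniformly, the isometries $i(g)i(h)$ and $i(gh)$ are at bounded distance on $Y$. By Corollary \ref{bdd-distance-coincide}, they must agree, so $i(gh) = i(g)i(h)$.

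There is no genuine obstacle here: the argument is a bookkeeping exercise in which the key input is the uniform constant $L$ from Proposition \ref{L} (independent of $g$), together with the near-identity $\psi \phi \sim \mathrm{Id}_X$. One should also note that $i$ sends the identity to $\mathrm{Id}_Y$, since $\bar e = \phi \psi \sim \mathrm{Id}_Y$ at distance $\leq C$, and by uniqueness of $i(e)$ we obtain $i(e) = \mathrm{Id}_Y$; applying the homomorphism identity then forces $i(g^{-1}) = i(g)^{-1}$, so $i$ really is a homomorphism $G \to \mathrm{Isom}(Y)$.
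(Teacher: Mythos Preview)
Your proof is correct and follows essentially the same approach as the paper: both first show $d_Y(\overline{gh}\,y,\bar g\bar h\,y)\leq 2C$ via the same computation, then chain triangle inequalities together with the uniform bound $L$ to conclude $i(gh)\sim i(g)i(h)$, and finish with Corollary~\ref{bdd-distance-coincide}. The only difference is cosmetic---you use that $\bar g$ is a $3C$-almost-isometry (Lemma~\ref{barg}) where the paper instead uses that $i(g_1)$ is an isometry, yielding slightly different but equally irrelevant constants ($5C+3L$ versus $2C+3L$).
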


\begin{proof}
Let $g_1,g_2 \in G$; we want to compare the elements $i(g_1 g_2)$ and $i(g_1) i(g_2)$ inside $\Isom (Y)$.  
By Corollary \ref{bdd-distance-coincide} (since $Y\neq \mathbb R$, as the space of directions of $Y$ is connected), it suffices to show that $i(g_1 g_2) \sim i(g_1) i(g_2)$. 
So we need to estimate the effect of these two isometries on a generic element $y\in Y$. As a preliminary estimate we have
\begin{eqnarray}
d_Y(\overline{g_1g_2}y,\bar{g_1}\bar{g_2}y)&=&d_Y\big((\phi g_1g_2 \psi)y , (\phi g_1 \psi)(\phi g_2\psi)y\big)\nonumber\\
&\leq&d_X\big(g_1(g_2 \psi y) , g_1(\psi \phi g_2 \psi y)\big)+C\nonumber\\
&=&d_X\big(g_2\psi y , (\psi \phi) (g_2 \psi y)\big)+C\nonumber\\
&\leq&2C.\nonumber
\end{eqnarray}
The first inequality uses that $\phi$ is a $C$-almost-isometry, while the second uses that $\psi \phi \sim \Id_X$.
Using this, we can now estimate:
\begin{eqnarray}
&& \hskip -40pt d_Y\big(i(g_1g_2)y,i(g_1)i(g_2)y\big)\nonumber\\
&\leq& d_Y\big(i(g_1g_2)y,(\overline{g_1g_2})y\big)+d_Y\big(\overline{g_1g_2}y,i(g_1)i(g_2)y\big)\nonumber\\
&\leq& d_Y\big(\overline{g_1g_2}y,i(g_1)i(g_2)y\big)+L\nonumber\\
&\leq& d_Y\big(\overline{g_1g_2}y,\bar{g_1}\bar{g_2}y\big)+d_Y\big(\bar{g_1}\bar{g_2}y,i(g_1)i(g_2)y\big)+L\nonumber\\
&\leq& d_Y\big(\bar{g_1}\bar{g_2}y,i(g_1)i(g_2)y\big)+2C+L\nonumber\\
&\leq& d_Y\big(\bar{g_1}(\bar{g_2}y),i(g_1)(\bar{g_2}y)\big)+d_Y\big(i(g_1)\bar{g_2}y,i(g_1)i(g_2)y\big)+2C+L\nonumber\\
&\leq& d_Y\big(i(g_1)\bar{g_2}y,i(g_1)i(g_2)y\big)+2C+2L\nonumber\\
&=&d_Y\big(\bar{g_2}y,i(g_2)y\big)+2C+2L\nonumber\\
&\leq&2C+3L.\nonumber
\end{eqnarray}
Since this estimate holds for arbitrary $y\in Y$, we conclude $i(g_1g_2)\sim i(g_1)i(g_2)$.
Applying Corollary \ref{bdd-distance-coincide}, this gives us $i(g_1g_2)=i(g_1)i(g_2)$, establishing the Lemma.

\end{proof}


\subsection{Verifying the new action is geometric} Now that we have constructed a homomorphism $i: G\rightarrow 
\text{Isom}(Y)$, our next step is to show that this $G$-action on $Y$ is geometric.

\begin{lem}\label{inj}
The homomorphism $i:G \rightarrow \Isom(Y)$ is injective.
\end{lem}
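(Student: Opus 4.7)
The plan is to show that if $i(g)=\Id_Y$ for some $g \in G$, then the action of $g$ on $X$ is a bounded isometry, and then apply Lemma \ref{nobounded} together with the freeness of the $G$-action on $X$ to conclude that $g$ is the identity element. The key observation is that the almost-isometry $\phi$ allows us to transport boundedness information from $Y$ back to $X$, and we have already developed all the tools in the preceding lemmas.

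Concretely, suppose $i(g) = \Id_Y$. Then by definition of $i$, we have the uniform estimate $d_Y(\bar g y, y) \leq L$ for every $y \in Y$, where $\bar{g} = \phi g \psi$. The first step is to translate this statement about the map $\bar g$ on $Y$ into an analogous statement about the action of $g$ on $X$. For $x \in X$, I would substitute $y = \phi x$ to obtain $d_Y(\phi g \psi \phi x, \phi x) \leq L$. Since $\psi \phi \sim \Id_X$ with constant $C$ and $g$ is an isometry of $X$, we get $d_X(g\psi\phi x, gx) = d_X(\psi\phi x, x) \leq C$, and since $\phi$ is a $C$-almost-isometry this gives $d_Y(\phi g \psi \phi x, \phi g x) \leq 2C$. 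Combining these via the triangle inequality yields $d_Y(\phi g x, \phi x) \leq L + 2C$. Finally, applying the lower quasi-isometric estimate for $\phi$ produces the bound $d_X(gx, x) \leq L + 3C$ for every $x \in X$.

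Thus $g$ acts on $X$ with uniformly bounded displacement function, i.e., $g$ is a bounded isometry of $X$ in the sense of the preliminaries. Since $X$ is CAT($-1$) and not isometric to $\mathbb{R}$ by hypothesis, Lemma \ref{nobounded} forces $g = \Id_X$ as an isometry of $X$. Because the $G$-action on $X$ is geometric, and in particular free, this implies that $g$ is the identity element of $G$. Hence $\ker(i)$ is trivial and $i$ is injective.

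I do not anticipate a real obstacle here: the entire argument is essentially a bookkeeping exercise using the triangle inequality and the almost-isometry estimates of $\phi$ and $\psi$, together with the two crucial hypotheses that $X$ is not a line (so Lemma \ref{nobounded} applies) and that the $G$-action on $X$ is free (to pass from $g = \Id_X$ to $g = e$ in $G$). The only mild point worth being careful about is keeping track of the constants produced by composing almost-isometries with $\psi\phi \sim \Id_X$, but these remain uniform in $g$ because $L$ was chosen independently of $g$ after Lemma \ref{barg}.
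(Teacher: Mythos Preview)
Your proof is correct and follows essentially the same strategy as the paper: show that if $i(g)=\Id_Y$ then the displacement function $d_X(gx,x)$ is uniformly bounded, then invoke Lemma \ref{nobounded}. The only cosmetic difference is that the paper first bounds $d_X(g(\psi y),\psi y)$ for all $y\in Y$ and then specializes to $y=\phi x$, whereas you substitute $y=\phi x$ from the start; this yields a slightly sharper constant ($L+3C$ versus the paper's $L+4C$), but the argument is the same.
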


\begin{proof}
Let $g \in G$ and assume that $i(g)=\Id_Y$.  By Lemma \ref{nobounded}, it suffices to show that $g\sim \Id _X$, 
so we need to estimate how far $g$ moves a generic element $x\in X$. First, observe that for each $y \in Y$, 
$$d_Y(\bar{g}y,y)=d_Y(\bar{g}y,i(g)y)\leq L.$$  
We can now estimate how far $g$ moves elements of the form $\psi y$: 
\begin{eqnarray}
d_X\big(g(\psi y) , \psi y\big)&\leq& d_Y\big(\phi (g \psi y) , \phi (\psi y)\big)+C\nonumber\\
&=&d_Y(\bar{g}y , \phi \psi y)+C\nonumber\\
&\leq&d_Y(\bar{g}y,y)+d_Y\big(y , (\phi \psi) y\big)+C\nonumber\\
&\leq&L+2C.\nonumber
\end{eqnarray}
Now for a generic $x \in X$, we have that $\phi x \in Y$, so we can estimate:
\begin{eqnarray}
d_X(gx,x)& \leq& d_X\big(gx,g(\psi \phi)x\big)+d_X\big(g(\psi \phi)x,x\big)\nonumber\\
&=&d_X\big(x,(\psi \phi)x\big)+d_X\big(g(\psi \phi)x,x\big)\nonumber\\
&\leq&d_X\big(g(\psi \phi)x,x\big)+C\nonumber\\
&\leq&d_X\big(g(\psi \phi)x, (\psi \phi)x)+d_X\big( (\psi \phi) x,x\big)+C\nonumber\\
&\leq&L+2C+C+C.\nonumber
\end{eqnarray}
This shows $g\sim \Id _X$, so by Lemma \ref{nobounded} (and using the hypothesis that $X\neq \mathbb R$) 
we conclude $g = \Id_X$, as claimed.
\end{proof}

\begin{lem}\label{propdisc}
The $G$-action on $Y$ is proper.
\end{lem}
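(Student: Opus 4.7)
The plan is to transfer properness of the geometric $G$-action on $X$ to the new action $i : G \rightarrow \Isom(Y)$ via the almost-isometric pair $(\phi, \psi)$. Since $Y$ is a proper metric space and $i(G)$ acts by isometries, it suffices to fix a basepoint $y_0 \in Y$ and an arbitrary $R > 0$, and show that the set
\[
S := \{g \in G : d_Y(i(g) y_0, y_0) \leq R\}
\]
is finite. The strategy is to replace each $i(g)$ with the ``conjugate-type'' almost-isometry $\bar g = \phi g \psi$, transport the resulting displacement bound on $y_0$ across $\psi$ to obtain a uniform displacement bound on $\psi y_0 \in X$, and then invoke properness of the original $G$-action on $X$.

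First, using the defining property $d_Y(\bar g y, i(g) y) \leq L$ for all $y \in Y$ and all $g \in G$, any $g \in S$ satisfies $d_Y(\bar g y_0, y_0) \leq R + L$. It is crucial here that $L$ does not depend on $g$, as was explicitly noted in the excerpt immediately after Lemma \ref{barg}.

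Next, since $\bar g y_0 = \phi g \psi y_0$, applying the $C$-almost-isometry $\psi$ yields
\[
d_X(\psi \phi g \psi y_0, \psi y_0) \leq d_Y(\bar g y_0, y_0) + C \leq R + L + C.
\]
Combining this with the uniform bound $d_X(\psi \phi x, x) \leq C$, applied to $x = g \psi y_0$, the triangle inequality produces
\[
d_X(g \psi y_0, \psi y_0) \leq d_X(g \psi y_0, \psi \phi g \psi y_0) + d_X(\psi \phi g \psi y_0, \psi y_0) \leq R + L + 2C,
\]
a uniform bound valid for every $g \in S$.

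Finally, because the $G$-action on $X$ is geometric (hence metrically proper on the proper space $X$), the set of $g \in G$ satisfying $d_X(g \cdot \psi y_0, \psi y_0) \leq R + L + 2C$ is finite; hence $S$ is finite, establishing properness of $i(G)$ on $Y$. The argument is a sequence of routine almost-isometry estimates; the only subtlety, which I would flag as the main ``obstacle'', is ensuring that the constant $L$ supplied by Proposition \ref{L} is $g$-independent, so that the transported bound on $X$ is uniform across all group elements in $S$. No geometric input beyond the already-established almost-isometry relations is required.
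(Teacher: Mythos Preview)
Your proof is correct and follows essentially the same route as the paper: both transfer a displacement bound from $Y$ to $X$ via the chain $i(g)\sim \bar g = \phi g\psi$, then invoke properness of the $G$-action on $X$. The only cosmetic difference is that the paper argues by contradiction with a ball $B_Y$ and varying points $y_j$, whereas you work directly with a fixed basepoint $y_0$; the key estimate $d_X(g\,\psi y_0,\psi y_0)\leq R+L+2C$ and the crucial observation that $L$ is independent of $g$ are identical in both.
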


\begin{proof}
We argue by contradiction.  If not, then there exist a $d_Y$-metric ball $B_Y \subset Y$ and an infinite sequence of distinct 
elements $i(g_j) \in i(G)$ with $i(g_j)B_Y \cap B_Y \neq \emptyset$ for each index $j$.  For each index $j$, choose 
$y_j,k_j \in B_Y$ such that $i(g_j)y_j=k_j$.  Let $D=\diam(B_Y)$.

Consider the closed $d_X$-metric ball  $B_X=\{x \in X\,\vert \, d_X(x,\psi y_0)\leq L+2D+5C\}$.  As the $G$ action on 
$X$ is proper, we obtain a contradiction by showing $g_j(\psi y_j) \in g_jB_X\cap B_X$ for each index $j$.
First note that for each $j$, 
$$d_Y(i(g_j)y_j,y_j)=d_Y(k_j,y_j)\leq D.$$  
Next we estimate how far each $g_j$ displaces the corresponding $\psi y_j$:
\begin{eqnarray}
d_X\big(g_j(\psi y_j),\psi y_j\big)&\leq&d_Y\big(\phi (g_j \psi y_j) , \phi (\psi y_j)\big)+C\nonumber\\
&=&d_Y\big(\bar{g_j}y_j, \phi \psi y_j\big)+C\nonumber\\
&\leq&d_Y(\bar{g_j}y_j,y_j)+d_Y\big(y_j, (\phi \psi) y_j\big)+C\nonumber\\
&\leq&d_Y(\bar{g_j}y_j,y_j)+2C\nonumber\\
&\leq&d_Y\big(\bar{g_j}y_j,i(g_j)y_j\big)+d_Y\big(i(g_j)y_j,y_j\big)+2C\nonumber\\
&\leq& L+D+2C.\nonumber
\end{eqnarray} 
We can now show  $\psi y_j \in B_X$ for each $j$, since
\begin{eqnarray}
d_X(\psi y_0,\psi y_j)&\leq&d_Y\big(\phi (\psi y_0),\phi (\psi y_j)\big)+C\nonumber\\
&\leq&d_Y\big((\phi \psi) y_0,y_0\big)+d_Y\big(y_0,\phi \psi y_j\big)+C\nonumber\\
&\leq&d_Y\big(y_0,\phi \psi y_j\big)+2C\nonumber\\
&\leq&d_Y\big(y_0,y_j\big)+d_Y\big(y_j,(\phi \psi) y_j\big)+2C\nonumber\\
&\leq&D+3C.\nonumber
\end{eqnarray}
Combining the above two inequalities, we obtain that $g_j(\psi y_j) \in B_X$, since 
$$d_X\big(g_j(\psi y_j),\psi y_0\big)\leq d_X\big(g_j(\psi y_j), \psi y_j)\big)+d_X\big( \psi y_j, \psi y_0)\leq (L+D+2C)+(D+3C).$$ 
We conclude that $g_j(\psi y_j) \in g_jB_X \cap B_X$ for each index $j$ as claimed above. This yields the desired contradiction,
and completes the proof of the Lemma.
\end{proof}

\begin{cor}\label{free}
The $G$-action on $Y$ is free.
\end{cor}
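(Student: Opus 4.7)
The plan is to deduce freeness from properness combined with the torsion-freeness of $G$. Suppose, for contradiction, that some $g \in G$ with $g \neq e$ satisfies $i(g)y = y$ for some $y \in Y$. Then every power $i(g)^n = i(g^n)$ also fixes $y$, so the infinite cyclic subgroup $\{i(g^n) : n \in \mathbb{Z}\}$ is contained in the stabilizer of $y$. The stabilizer of any point is finite by Lemma~\ref{propdisc} (applied to a small enough ball around $y$), so $i(g)$ has finite order in $\Isom(Y)$. Since $i$ is a homomorphism and is injective by Lemma~\ref{inj}, the element $g$ itself must have finite order in $G$.

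Next, I would argue that $G$ is torsion-free, which gives the contradiction $g = e$. Because $G$ acts geometrically on $X$, the action is in particular free. Since $X$ is a proper CAT($-1$) space, it is a complete CAT($0$) space. By the Bruhat--Tits/Cartan fixed-point theorem, every finite group of isometries of a complete CAT($0$) space has a global fixed point (applied to the circumcenter of a finite orbit). If $g \in G$ had finite order $n > 1$, then the cyclic group $\langle g \rangle$ would fix some point of $X$, contradicting freeness of the $G$-action on $X$. Hence $G$ is torsion-free, so $g = e$, as desired.

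I do not expect any real obstacle here: the only slightly delicate step is extracting finiteness of the point stabilizer from Lemma~\ref{propdisc}, but since properness of an isometric action on a proper metric space immediately implies that point stabilizers are finite, this is essentially automatic. The argument proceeds entirely at the group-theoretic level once the properness of the new $G$-action on $Y$ is in hand.
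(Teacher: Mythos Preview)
Your argument is correct and follows essentially the same route as the paper's proof: both combine properness of the $i(G)$-action on $Y$ (Lemma~\ref{propdisc}), injectivity of $i$ (Lemma~\ref{inj}), and torsion-freeness of $G$ coming from its free action on the CAT($0$) space $X$. The only difference is the order in which the ingredients are invoked---the paper first notes $G$ is torsion-free and then derives infinitely many fixed-point elements contradicting properness, whereas you first use properness to get finite order and then contradict torsion-freeness---and you spell out the Cartan/Bruhat--Tits fixed-point step that the paper leaves implicit.
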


\begin{proof}
If not, then there exists a nonidentity element $g \in G$ and a point $y \in Y$ with $i(g)y=y$.  Note that 
$G$ is torsion-free (as the $G$-action on $X$ is free), and $i$ is 
injective by Lemma \ref{inj}, so $i(g)\in \Isom(Y)$ also has infinite order. This gives infinitely many elements fixing the point 
$y$ contradicting Lemma \ref{propdisc}.
\end{proof}

\begin{lem}\label{cocompact}
The $G$-action on $Y$ is cocompact.
\end{lem}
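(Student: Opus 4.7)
The plan is to use the known cocompactness of the original $G$-action on $X$, together with the almost-isometries $\phi$ and $\psi$, to transfer cocompactness to the new action $i(G)$ on $Y$. Concretely, it suffices to exhibit a point $y_0 \in Y$ and a constant $R > 0$ such that every $y \in Y$ lies within distance $R$ of some $i(g)y_0$. Fix any $y_0 \in Y$ and set $x_0 := \psi y_0 \in X$. Since the $G$-action on $X$ is cocompact, there is a constant $R_X$ such that for every $x \in X$ there exists $g \in G$ with $d_X(g x_0, x) \leq R_X$.

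First, I would pull an arbitrary $y \in Y$ back to $X$ via $\psi$, obtaining $\psi y \in X$, and choose $g \in G$ with $d_X(g \psi y_0, \psi y) \leq R_X$. Then I would push the resulting inequality forward via $\phi$, which is a $C$-almost-isometry, to obtain
\[
d_Y\big(\phi g \psi y_0,\ \phi \psi y\big) \leq R_X + C,
\]
i.e. $d_Y(\bar g y_0, \phi \psi y) \leq R_X + C$. Next I would replace $\bar g y_0$ by $i(g)y_0$ using the uniform bound $d_Y(\bar g y_0, i(g) y_0) \leq L$ coming from Proposition~\ref{L}, and I would replace $\phi \psi y$ by $y$ using $\phi \psi \sim \Id_Y$, which costs at most $C$. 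Two applications of the triangle inequality then yield
\[
d_Y\big(i(g)y_0,\ y\big) \leq L + R_X + 2C.
\]

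Since $y \in Y$ was arbitrary, this shows $Y$ is the $i(G)$-orbit of the closed ball $\overline{B_Y(y_0, L + R_X + 2C)}$, which is compact because $Y$ is proper. Hence the $G$-action on $Y$ (via $i$) is cocompact. I do not anticipate any substantial obstacle: the argument is a direct bookkeeping exercise, analogous in style to Lemmas~\ref{barg}, \ref{inj}, and \ref{propdisc}, and the only ingredients needed are the cocompactness of the $G$-action on $X$, the almost-isometry estimates for $\phi, \psi$, the relations $\phi\psi \sim \Id_Y$ and $\psi\phi \sim \Id_X$, the uniform distance $L$ between $\bar g$ and $i(g)$, and properness of $Y$ (so that metric balls are compact).
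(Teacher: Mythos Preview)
Your proposal is correct and follows essentially the same approach as the paper: pull an arbitrary $y$ back to $X$ via $\psi$, use cocompactness of $G$ on $X$ to find $g$, push forward via $\phi$, and then adjust using the uniform bounds $d_Y(\bar g\,\cdot\,,i(g)\,\cdot\,)\leq L$ and $\phi\psi\sim\Id_Y$. The only cosmetic difference is your choice of basepoint: you fix $y_0\in Y$ and set $x_0:=\psi y_0$, whereas the paper fixes $x_0\in X$ and uses $\phi x_0$ as the center in $Y$; your choice makes $\bar g y_0=\phi g x_0$ directly and shaves one $\psi\phi\sim\Id_X$ step off the estimate (yielding $R_X+2C+L$ instead of the paper's $R+3C+L$), but the strategy is identical.
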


\begin{proof}
Since $Y$ is a proper metric space, it suffices to prove that there is a closed $d_Y$-metric ball $B_Y$ such that the 
$i(G)$-translates of $B_Y$ cover $Y$.  As the $G$-action on $X$ is cocompact, and $X$ is a proper metric space, 
there exists $x_0 \in X$ and $R>0$ such that the $G$-translates of the closed $d_X$-metric ball 
$$B_X=\{x \in X\, \vert\, d_X(x,x_0)\leq R\}$$ 
cover $M$.  Let 
$$B_Y=\{y \in Y\, \vert\, d_Y(y,\phi x_0)\leq R+3C+L\}.$$  
Fix $y \in Y$.  As the $G$-translates of $B_X$ cover $X$, there exists $g \in G$ such that $\psi y \in g B_X$, 
or equivalently, 
$$d_X(gx_0,\psi y)\leq R.$$  
We conclude the proof by showing that 
$$d_Y(i(g)(\phi x_0),y)\leq R+3C+L$$
 or equivalently that $y \in i(g)B_Y$. Indeed, we can estimate
\begin{eqnarray}
d_Y\big(i(g)(\phi x_0),y\big)&\leq&d_Y\big(i(g)(\phi x_0),\bar{g}(\phi x_0)\big)+d_Y\big(\bar{g}\phi x_0,y\big)\nonumber\\
&\leq&d_Y\big(\bar{g}\phi x_0,y\big)+L\nonumber\\
&=&d_Y\big((\phi g \psi) \phi x_0,y\big)+L\nonumber\\
&\leq&d_X\big(\psi (\phi g\psi \phi x_0),\psi (y)\big)+C+L\nonumber\\
&\leq&d_X\big((\psi \phi)(g\psi \phi x_0), g\psi \phi x_0\big)+d_X(g\psi \phi x_0, \psi y)+C+L\nonumber\\
&\leq&d_X\big(g(\psi \phi x_0), \psi y)+2C+L\nonumber\\
&=&d_X\big(\psi \phi x_0,g^{-1}(\psi y)\big)+2C+L\nonumber\\
&\leq&d_X((\psi \phi) x_0,x_0)+d_X(x_0,g^{-1}\psi y)+2C+L\nonumber\\
&\leq&d_X(x_0,g^{-1}\psi y)+3C+L\nonumber\\
&=&d_X(gx_0,\psi y)+3C+L\nonumber\\
&\leq&R+3C+L.\nonumber
\end{eqnarray}
\end{proof}

Combining Lemma \ref{propdisc}, Corollary \ref{free}, and Lemma \ref{cocompact}, we see that 
the $G$-action on $Y$ given by $i: G\rightarrow \Isom(Y)$ is geometric.


\subsection{An equivariant almost-isometry}

Let $\Omega \subset X$ be a \textit{strict} fundamental domain for the $G$-action on $X$.  In other words, $\Omega$ 
consists of a single point from each $G$-orbit in $X$. Then for each $x \in X$, there exist unique $g \in G$ and 
$\omega \in \Omega$ with $g\omega=x$.

Define $\Phi :X \rightarrow Y$ by $\Phi (x)=\Phi (g\omega) :=i(g)\phi \omega.$  By construction, $\Phi$ is equivariant 
with respect to the $G$ and $i(G)$ actions on $X$ and $Y$ respectively.

\begin{lem}\label{F}
The $(G,i(G))$-equivariant map $\Phi :X \rightarrow Y$ is a $(5C+2L)$-almost-isometry.
\end{lem}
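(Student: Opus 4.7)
The plan is to show that $\Phi$ differs from the original almost-isometry $\phi$ by a uniformly bounded amount, from which the $(5C+2L)$-almost-isometry property will follow essentially for free from the corresponding property for $\phi$.

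The key estimate is that $d_Y(\Phi x, \phi x) \leq L + 2C$ for every $x \in X$. To see this, write $x = g\omega$ for a unique $g \in G$ and $\omega \in \Omega$. The definition of $i(g)$ gives
\[
d_Y(i(g)\phi\omega, \bar{g}\phi\omega) \leq L,
\]
where $\bar{g}\phi\omega = \phi g \psi \phi \omega$. Since $\psi\phi \sim \Id_X$ with constant $C$ and $\phi$ is a $C$-almost-isometry, one obtains
\[
d_Y(\phi g \psi\phi\omega,\ \phi g \omega) \leq d_X(g\psi\phi\omega, g\omega) + C = d_X(\psi\phi\omega,\omega) + C \leq 2C.
\]
Combining these two inequalities (and recalling that $\Phi x = i(g)\phi\omega$ and $\phi x = \phi g\omega$) yields the claimed bound $d_Y(\Phi x, \phi x) \leq L+2C$.

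Given this, the almost-isometric inequality for $\Phi$ is a three-term triangle inequality. For any $x_1,x_2 \in X$,
\[
d_Y(\Phi x_1,\Phi x_2) \leq d_Y(\Phi x_1,\phi x_1) + d_Y(\phi x_1,\phi x_2) + d_Y(\phi x_2,\Phi x_2) \leq d_X(x_1,x_2) + 5C + 2L,
\]
using that $\phi$ is a $C$-almost-isometry. The reverse inequality $d_Y(\Phi x_1,\Phi x_2)\geq d_X(x_1,x_2) - (5C+2L)$ is obtained by the symmetric argument. For coarse surjectivity, given $y\in Y$, one uses that $\phi$ is $C$-coarsely onto to find $x \in X$ with $d_Y(\phi x, y)\leq C$, and then $d_Y(\Phi x,y) \leq d_Y(\Phi x,\phi x)+d_Y(\phi x,y) \leq (L+2C)+C \leq 5C+2L$.

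The only mild subtlety is keeping track of constants carefully when invoking that $\phi$ is a $C$-almost-isometry at each stage; no step should require going beyond the elementary estimates already established for $\phi$, $\psi$, $\bar{g}$, and $i(g)$. The main conceptual point -- that $\Phi$ lies uniformly close to $\phi$ -- is what makes this lemma essentially automatic, and this is the only place where the full force of the definition of $i$ (via Proposition \ref{L}) is needed.
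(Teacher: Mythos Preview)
Your proof is correct and follows essentially the same approach as the paper: both establish the key estimate $d_Y(\Phi x,\phi x)\leq 2C+L$ by unwinding $\bar g\phi\omega=\phi g\psi\phi\omega$ and using $\psi\phi\sim\Id_X$, then deduce the almost-isometric inequalities and coarse surjectivity by the triangle inequality. The only cosmetic difference is that the paper takes $x=\psi y$ explicitly for coarse surjectivity rather than invoking $C$-coarse ontoness of $\phi$, which amounts to the same thing.
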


\begin{proof}
Let $x \in X$.  There are unique $g \in G$ and $\omega \in \Omega$ such that $x=g\omega$.  Then 
\begin{eqnarray}
d_Y\big(\phi x,\Phi x\big)&=&d_Y\big(\phi g\omega,\Phi g\omega\big)\nonumber\\
&=&d_Y\big(\phi g\omega,i(g)\phi \omega\big)\nonumber\\
&\leq&d_Y\big(\phi g\omega,\bar{g}\phi \omega\big)+d_Y\big(\bar{g}(\phi \omega),i(g)(\phi \omega)\big)\nonumber\\
&\leq&d_Y\big(\phi g\omega,\bar{g}\phi \omega\big)+L\nonumber\\
&=&d_Y\big(\phi g\omega,(\phi g \psi)\phi \omega\big)+L\nonumber\\
&\leq&d_X\big(g\omega,g\psi \phi \omega\big)+C+L\nonumber\\
&=&d_X\big(\omega,(\psi \phi)\omega\big)+C+L\nonumber\\
&\leq&2C+L.\nonumber
\end{eqnarray}  
It follows that for $x_1,x_2 \in X$
\begin{eqnarray}
d_Y\big(\Phi x_1,\Phi x_2\big)&\leq& d_Y\big(\Phi x_1, \phi x_1\big)+d_Y\big(\phi x_1, \phi x_2\big)+
d_Y\big( \phi x_2, \Phi x_2\big)\nonumber\\
&\leq&d_Y\big( \phi x_1, \phi x_2\big)+4C+2L\nonumber\\
&\leq&d_X\big (x_1,x_2\big )+5C+2L.\nonumber
\end{eqnarray}  
A similar argument gives the estimate
$$d_X\big(x_1,x_2\big) \leq d_Y\big( \Phi x_1, \Phi x_2\big)+5C+2L.$$
The previous two inequalities show that $\Phi $ is a $(5C+2L)$-almost-isometric map.  It remains to show that 
$\Phi$ is $(5C+2L)$-coarsely onto.  Let $y \in Y$.  Then $\psi y \in X$ and 
$$d_Y\big (\Phi (\psi y),y\big)\leq d_Y\big(\Phi (\psi y),\phi (\psi y)\big)+d_Y\big (\phi \psi y,y\big)\leq 2C+L+C,$$ 
concluding the proof.
\end{proof}


\subsection{Comparing the marked length spectrum}

To summarize, we constructed a new $G$-action on $Y$, given by $i:G \rightarrow \Isom(Y)$, which we have
shown to be geometric. We also constructed an {\it equivariant} almost-isometry $\Phi$ from $X$ to $Y$. We now compare
the translation lengths for the $G$-actions on $X$ and $Y$. Let $\bar{C} = 5C+2L$, the almost-isometry constant
for the equivariant almost-isometry $\Phi : X\rightarrow Y$.

\begin{lem}\label{same-MLS}
For every $g\in G$, we have $\tau(g)=\tau(i(g))$. 
\end{lem}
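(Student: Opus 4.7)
The plan is to compute both translation lengths using the asymptotic formula \eqref{formula}, and then exploit the equivariance of $\Phi$ together with the fact that $\Phi$ is a $\bar{C}$-almost-isometry (where $\bar{C}=5C+2L$) to match the two limits on the nose.

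First, I would observe that since $G$ acts geometrically on $X$ by hypothesis and, by the combination of Lemmas \ref{propdisc}, \ref{cocompact} and Corollary \ref{free}, also geometrically on $Y$ via $i$, every $g \in G$ acts on $X$ as a semi-simple isometry and every $i(g)$ acts on $Y$ as a semi-simple isometry. Consequently formula \eqref{formula} applies on both sides: for any basepoint $x \in X$,
\[
\tau(g) \;=\; \lim_{n \to \infty} \frac{d_X(x, g^n x)}{n}, \qquad
\tau(i(g)) \;=\; \lim_{n \to \infty} \frac{d_Y\bigl(\Phi x,\, i(g)^n \Phi x\bigr)}{n}.
\]

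The key step is then to use equivariance. By the definition of $\Phi$ together with Lemma \ref{F}, for every $n$ we have $i(g)^n \Phi x = \Phi(g^n x)$, and the almost-isometry bound applied to the pair $x, g^n x \in X$ gives
\[
d_X(x, g^n x) - \bar{C} \;\leq\; d_Y\bigl(\Phi x, \Phi(g^n x)\bigr) \;\leq\; d_X(x, g^n x) + \bar{C}.
\]
Dividing through by $n$ and letting $n \to \infty$, the additive constant $\bar{C}/n$ disappears and the two limits coincide, so $\tau(i(g)) = \tau(g)$.

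I do not anticipate any real obstacle here: the only mild points to mention explicitly are that semi-simplicity on both sides is needed to invoke \eqref{formula} (this comes from geometricity, already established), and that the equivariance $\Phi \circ g = i(g) \circ \Phi$ iterates to $\Phi \circ g^n = i(g)^n \circ \Phi$, which is immediate by induction. The entire argument is a two-line asymptotic comparison once those observations are in place.
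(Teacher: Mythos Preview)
Your proposal is correct and follows essentially the same approach as the paper: both use the asymptotic formula \eqref{formula} for translation length together with the equivariance of $\Phi$ and its $\bar C$-almost-isometry bound. The only cosmetic difference is that you invoke the two-sided almost-isometry inequality to obtain equality in one stroke, whereas the paper proves $\tau(g)\geq \tau(i(g))$ and then appeals to a coarse inverse of $\Phi$ for the reverse inequality.
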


\begin{proof}
By formula (\ref{formula}), for any $x \in X$ we have
\begin{eqnarray}
\tau(g)&=&\lim_{n \rightarrow \infty} \frac{d_X(x,g^n x)}{n}\nonumber\\ \nonumber\\
&\geq& \lim_{n\rightarrow \infty} \frac{d_Y(\Phi x,\Phi g^nx)-\bar{C}}{n}\nonumber\\ \nonumber\\
&=& \lim_{n \rightarrow \infty} \frac{d_Y(\Phi x,i(g)^n \Phi x)}{n}=\tau(i(g)).\nonumber
\end{eqnarray}
An identical argument, using a coarse inverse to $\Phi$, gives the reverse inequality.

\end{proof}


\subsection{Concluding the proof} We now have isometric $G$-actions on $X$ and $Y$.
We have shown that the action on $Y$ is geometric, and that the two actions have the same translation lengths.
Since $X$, by hypothesis, is marked length spectrum rigid, we conclude that there is an equivariant isometric embedding
$\psi: X\rightarrow Y$. Finally, to see that $\psi$ is coarsely onto, we just note that $\partial \psi \equiv \partial \Phi$
(as both these maps are at finite distance from the same orbit map), so $\psi$ and $\Phi$ are at bounded distance
apart. Also, the first part of the proof of Lemma \ref{F} shows $\Phi$ and $\phi$ are at bounded distance apart,
so we deduce that $\psi$ and $\phi$ are at bounded distance apart. Since $\phi$ is coarsely onto, we conclude
$\psi$ is coarsely onto.  This completes the proof of the Theorem \ref{theorem-AI+MLS}.


\subsection{Application -- locally symmetric manifolds.}
In this section we prove Corollary \ref{symmetric}, dealing with quaternionic hyperbolic space 
$\mathbb O \mathbb H^n$ and the Cayley hyperbolic plane $\text{Ca}\mathbb H^2$ .

\begin{proof}[Proof of Corollary \ref{symmetric}]
Pansu \cite{Pa} has shown that $\mathbb O \mathbb H^n$ and $\text{Ca}\mathbb H^2$ are QI-rigid,
and hence AI-rigid. Combining work of Hamenstadt \cite{Ha} and Besson-Courtois-Gallot \cite{BCG}, we also know that 
uniform lattices in the semi-simple Lie groups $Sp(n,1)$ and $F_{4, -20}$ are marked length spectrum rigid within the 
class of actions on negatively curved manifolds of the same dimension as the corresponding symmetric space. 

Following the notation in our Theorem \ref{theorem-AI+MLS}, we let $Y= (\tilde M, \tilde g_0)$ denote the symmetric space,
and $X=(\tilde M, \tilde g_1)$ the universal cover with the exotic metric. Proceeding as in the Main Theorem, we assume
there is an almost-isometry $\phi: X\rightarrow Y$. One then uses AI-rigidity of the symmetric space $Y$ to 
construct a new geometric $G$-action on $Y$, so that that the two $G$-actions
have the same marked length spectrum (Lemma \ref{same-MLS}). Finally, we apply marked length rigidity for the 
$G$-action {\it on the symmetric space $Y$} (rather than on the symmetric space $X$) to obtain a coarsely onto isometric
embedding of $Y$ into $X$. Since $X$, $Y$ are complete Riemannian manifolds of the same dimension, such a map
provides an isometry between $X$ and $Y$. Thus $X$ is also a symmetric space, and so $(M, g_1)$ had to also be 
locally symmetric, as claimed. 
\end{proof}


\subsection{Application -- Fuchsian buildings.}\label{proof-Fuchsian-building}

We start by quickly recalling some of the terminology concerning Fuchsian buildings, which were first introduced by Bourdon
\cite{Bou2}. These are $2$-dimensional polyhedral complexes which satisfy a number of axioms. First, one starts with a 
compact convex hyperbolic polygon $R\subset \mathbb H^2$, with each angle of the form $\pi/m_i$ for some $m_i$
associated to the vertex ($m_i\in \mathbb N,
m_i\geq 2$). Reflection in the geodesics extending the sides of $R$ generate a Coxeter group $W$, and the orbit of $R$ under
$W$ gives a tessellation of $\mathbb H^2$. Cyclically labeling the vertices of $R$ by the integers $\{1\}, \ldots, \{k\}$ (so that the
$i^{th}$ vertex has angle $\pi/m_i$), and
the corresponding edges by $\{1,2\}, \{2,3\}, \ldots ,\{k, 1\}$, one can apply the $W$ action to obtain a $W$-invariant labeling
of the tessellation of $\mathbb H^2$; this labeled polyhedral $2$-complex will be denoted $A_R$, and called the {\it model 
apartment}.

A polygonal $2$-complex $X$ is called a $2$-dimensional hyperbolic building if it contains a vertex labeling by the integers
$\{1, \ldots , k\}$, along with a distinguished collection of subcomplexes $\mathcal A$ called the {\it apartments}. The individual
polygons in $X$ will be called {\it chambers}. The complex is required to have the following properties:
\begin{itemize}
\item each apartment $A\in \mathcal A$ is isomorphic, as a labeled polygonal complex, to the model apartment $A_R$,
\item given any two chambers in $X$, one can find an apartment $A\in \mathcal A$ which contains the two chambers, and
\item given any two apartments $A_1, A_2\in \mathcal A$ that share a chamber, there is an isomorphism of labeled 
$2$-complexes $\phi: A_1\rightarrow A_2$ that fixes $A_1\cap A_2$. 
\end{itemize}
If in addition each edge labeled $i$ has a fixed number $q_i$ of incident polygons, then $X$ is called a {\it Fuchsian building}.
For a Fuchsian building, the combinatorial axioms force some additional structure on the links of vertices: these graphs 
must be
{\it generalized $m$-gons} in the sense of Tits. Work of Feit and Higman \cite{FH} then implies that each $m_i$ must lie in the set $\{2, 3, 4, 6, 8\}$.
Note that making each polygon in $X$ isometric to $R$ via the label-preserving map produces a CAT(-1) metric on $X$.
However, a given polygonal $2$-complex might have several metrizations as a Fuchsian building: these correspond to 
varying the hyperbolic metric on $R$ while preserving the angles at the vertices. Any such variation induces a new 
CAT(-1) metric on $X$. The hyperbolic polygon $R$ is called {\it normal} if it has an inscribed circle that touches all its sides --
fixing the angles of a polygon to be $\{\pi/m_1, \ldots ,\pi/m_k\}$, there is a unique normal hyperbolic polygon with those 
given vertex angles.  We can now state a rigidity result for Fuchsian buildings.


\begin{cor}\label{Fuchsian-building}
Let $G$ be a group acting freely and cocompactly on a combinatorial Fuchsian buildings $X$  having no vertex links which are generalized $3$-gons. Let $d_0$ be the metric on $X/\Gamma$ where each chamber 
is the normal hyperbolic polygon, and let $d_1$ be a locally CAT(-1) metric, where each polygon has a Riemannian 
metric of curvature $\leq 1$ with geodesic sides. 
Then the universal covers $(X, \tilde d_0)$ and $(X, \tilde d_1)$ are almost-isometric 
if and only if they are isometric, in which case the isometry can be chosen to be equivariant with respect to the $G$-actions,
and hence $(X/\Gamma, d_0)$ is isometric to $(X/\Gamma, d_1)$.
\end{cor}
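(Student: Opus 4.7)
The plan is to invoke Theorem~\ref{theorem-AI+MLS}, taking $(\tilde X, \tilde d_0)$ as the AI-rigid space $Y$ and $(\tilde X, \tilde d_1)$ (with its $G$-action) as the MLS-rigid $G$-space. I first check the structural hypotheses. Both metrics are CAT(-1) (by hypothesis for $\tilde d_1$, by construction from hyperbolic polygons for $\tilde d_0$), and neither space is isometric to $\mathbb{R}$ since $G$ acts freely and cocompactly on a $2$-dimensional polygonal complex. For the geodesic extension property of $(\tilde X, \tilde d_0)$: inside any chamber the metric is hyperbolic, and through any edge or vertex a segment can be extended by passing to an apartment containing it and using the isomorphism with the model apartment $A_R$. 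The space of directions at an interior chamber point is a circle; at an interior edge point it is the suspension of $q_i$ points, hence connected; at a vertex it is the link, a generalized $m_i$-gon, which is connected for $m_i \geq 2$.

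For AI-rigidity of $(\tilde X, \tilde d_0)$ I would invoke the quasi-isometric rigidity theorem of Bourdon--Pajot for Fuchsian buildings whose vertex links are generalized $m$-gons with $m \neq 3$ (this is precisely where the no-$3$-gon hypothesis enters): every self-quasi-isometry of such a building is bounded distance from an isometry. Since almost-isometries are quasi-isometries, AI-rigidity follows at once.

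The step I expect to be the main obstacle is marked length spectrum rigidity for the $G$-action on $(\tilde X, \tilde d_1)$. One must show that whenever $G$ acts geometrically on another CAT(-1) space $Z$ with identical translation length function, there is an equivariant isometric embedding $(\tilde X, \tilde d_1) \hookrightarrow Z$. The natural strategy is to recover the apartment and chamber structure from axes of suitable hyperbolic elements of $G$, reconstructing in particular the boundary circuits of individual chambers, and then to determine each side length and each vertex angle from translation lengths of carefully chosen elements running along chamber sides or cycling through neighborhoods of vertex stabilizers. Establishing this rigorously is where the bulk of the technical work lies.

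With both hypotheses in place, Theorem~\ref{theorem-AI+MLS} yields a coarsely onto isometric embedding $\psi : (\tilde X, \tilde d_1) \hookrightarrow (\tilde X, \tilde d_0)$. Since both spaces are complete $2$-dimensional CAT(-1) spaces with cocompact $G$-actions, $\psi$ is necessarily surjective, hence an isometry. The proof of Theorem~\ref{theorem-AI+MLS} moreover produces $\psi$ equivariantly between the original $G$-action on $(\tilde X, \tilde d_1)$ and the new geometric $G$-action constructed on $(\tilde X, \tilde d_0)$; by AI-rigidity of $(\tilde X, \tilde d_0)$ this new action is conjugate to the original $\tilde d_0$-isometric $G$-action, yielding the equivariance statement and, upon passing to the quotient, the isometry $(X/G, d_0) \cong (X/G, d_1)$.
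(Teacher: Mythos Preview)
Your overall architecture matches the paper's, but you have inverted the roles of the two rigidity inputs, and this creates a genuine gap. The paper does \emph{not} use MLS-rigidity of $(\tilde X,\tilde d_1)$; it uses MLS-rigidity of the \emph{normal} metric $(\tilde X,\tilde d_0)$, which is a theorem of Constantine--Lafont \cite{CL}. That result says: under the no-$3$-gon link hypothesis, the normal metric $\tilde d_0$ is MLS-rigid within exactly the class of metrics to which $\tilde d_1$ belongs. So the no-$3$-gon condition enters through MLS-rigidity, not through QI-rigidity. Conversely, the QI-rigidity (hence AI-rigidity) of $(\tilde X,\tilde d_0)$ is due to Xie \cite{X}, and the crucial hypothesis there is that the chambers are \emph{normal} hyperbolic polygons; without normality QI-rigidity fails. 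Your attribution to Bourdon--Pajot and your placement of the no-$3$-gon hypothesis are both off.

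The practical consequence is that the step you flag as ``the main obstacle'' --- proving MLS-rigidity for the arbitrary CAT($-1$) metric $\tilde d_1$ --- is not needed and is not known. Your sketch for it (reconstructing chambers from axes and reading off side lengths and angles) is plausible in spirit but would amount to proving a new theorem. The paper avoids this entirely: as in the proof of Corollary~\ref{symmetric}, one follows the \emph{method} of Theorem~\ref{theorem-AI+MLS} rather than its literal statement. Starting from the original $G$-action on $(\tilde X,\tilde d_1)$, AI-rigidity of $(\tilde X,\tilde d_0)$ produces a new geometric $G$-action on $(\tilde X,\tilde d_0)$ with the same translation lengths (Lemma~\ref{same-MLS}); then one applies MLS-rigidity of $\tilde d_0$ (Constantine--Lafont) to get the equivariant isometric embedding $(\tilde X,\tilde d_0)\hookrightarrow(\tilde X,\tilde d_1)$, which is an isometry by completeness and dimension. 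Once you swap which metric carries the MLS-rigidity, your argument aligns with the paper's and the ``main obstacle'' disappears.
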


\begin{proof}[Proof of Corollary \ref{Fuchsian-building}] The argument for this is similar to the proof of Corollary \ref{symmetric}.
Let $\phi: (X, \tilde d_0) \rightarrow (X, \tilde d_1)$ be the almost-isometry between the universal covers. For the Fuchsian
building $(X, \tilde d_0)$, Xie \cite{X} has established QI-rigidity (and hence AI-rigidity). It is important here that 
for the $\tilde d_0$-metric all polygons are normal -- otherwise QI-rigidity does {\bf not} hold. Using the AI-rigidity, we 
can construct a new geometric $\Gamma$-action on $(X, \tilde d_0)$. The $\Gamma$-actions on $(X, \tilde d_0)$ and
$(X, \tilde d_1)$ now have the same marked length spectrum (see Lemma \ref{same-MLS}). But Constantine and Lafont
\cite{CL} have established that, when there are no vertex links which are generalized $3$-gons, the metric $\tilde d_0$ is 
marked length spectrum rigid within the class of metrics described in the statement of our corollary (thus including $\tilde d_1$). 
This establishes the corollary.
\end{proof}


\section{AIs and volume growth}\label{section-proof-of-volgrowth}

In this section, we establish Theorem \ref{theorem-volgrowth}. We start by reminding
the reader of
a standard packing/covering argument, which allows us to reinterpret volume growth entropy in 
terms of quantities we can estimate.

\begin{lem}\label{count-covering}
Let $M$ be a Riemannian cover of a compact manifold.  Fix a basepoint
$p\in M$, a parameter $s>0$, and define the counting function $N(s,r)$ to be the minimal cardinality of a 
covering of $B_p(r)$ by balls of radius $s$. Then for any choice of $s$, we have that 
$$h_{vol}(M) = \lim _{r\to \infty} \frac{\ln \left(N(s,r)\right)}{r}.$$
\end{lem}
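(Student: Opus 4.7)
The plan is to run the standard packing-covering argument and sandwich $N(s,r)$ between the volumes of $B_p(r)$ and $B_p(r+s/2)$, then divide by $r$ and take limits.

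First I would exploit bounded geometry: since $M$ is a Riemannian cover of a compact manifold, the deck group acts cocompactly by isometries, so there exist constants
$$0 < V_-(t) := \inf_{x \in M} \mathrm{Vol}(B_x(t)) \leq \sup_{x \in M} \mathrm{Vol}(B_x(t)) =: V_+(t) < \infty$$
for every $t > 0$. This is exactly the uniform upper/lower ball-volume property the authors mention is guaranteed by their hypotheses.

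Next I would introduce the packing number $P(s,r)$, defined as the maximal cardinality of an $s$-separated subset of $B_p(r)$, and observe two elementary comparisons. On the one hand, the $s/2$-balls around the points of a maximally $s$-separated set are pairwise disjoint and contained in $B_p(r+s/2)$, giving
$$P(s,r) \cdot V_-(s/2) \leq \mathrm{Vol}(B_p(r+s/2)).$$
On the other hand, by maximality the set is $s$-dense in $B_p(r)$, so the $s$-balls around its points cover $B_p(r)$, giving $N(s,r) \leq P(s,r)$. Meanwhile, any covering of $B_p(r)$ by $N(s,r)$ balls of radius $s$ forces
$$\mathrm{Vol}(B_p(r)) \leq N(s,r) \cdot V_+(s).$$
Chaining these yields the two-sided estimate
$$\frac{\mathrm{Vol}(B_p(r))}{V_+(s)} \leq N(s,r) \leq \frac{\mathrm{Vol}(B_p(r+s/2))}{V_-(s/2)}.$$

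Taking logarithms, dividing by $r$, and letting $r \to \infty$, the constants $V_\pm$ contribute $0$ in the limit, and the factor $(r+s/2)/r \to 1$ makes the right-hand term have the same exponential rate as the left-hand term. Since $h_{vol}(M)$ is the common value of $h_{vol}^\pm$ for Riemannian covers of compact manifolds, the genuine limit (not just $\limsup$ or $\liminf$) of $r^{-1}\ln N(s,r)$ exists and equals $h_{vol}(M)$.

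There is no real obstacle here; the only point requiring any care is noting that the covering argument uses $V_-$ at scale $s/2$ (not $s$) and that the upper bound involves $B_p(r+s/2)$ rather than $B_p(r)$, so one must verify that these shifts wash out in the exponential-rate limit — which they do because the shift $s/2$ is fixed while $r \to \infty$.
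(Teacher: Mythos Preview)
Your proof is correct and follows essentially the same packing/covering argument as the paper. The only difference is cosmetic: the paper phrases the upper bound directly as $N(s,r)\leq \mathrm{Vol}(B_p(r))/v_s$ (glossing over the $s/2$-overshoot of the packing balls past $B_p(r)$), whereas you track the shift to $B_p(r+s/2)$ explicitly and then note it washes out in the limit---your version is slightly more careful, but the content is the same.
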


\begin{proof}
Let $V_{s}<\infty$ be the maximal volume of a ball of radius $s$, and $v_{s}>0$ be the minimal
volume of a ball of radius $s/2$ (so clearly $v_{s}< V_{s}$).  A maximal packing of $B_p(r)$ by disjoint balls of radius $s/2$ induces a covering of $B_p(r)$ by balls of radius $s$ with the same centers.  We thus obtain the following bounds:
$$\frac{Vol\left( B_p(r)\right)}{V_{s}}\leq N(s,r) \leq \frac{Vol\left( B_p(r)\right)}{v_{s}}.$$
Since both $V_s$, $v_{s}$ are fixed real numbers, taking the log and the limit as $r\to \infty$ yields the Lemma.
\end{proof}

Now with Lemma \ref{count-covering} in hand, the proof is straightforward. We will use the almost-isometry 
to relate the counting function $N_1(s, r)$ for the manifold $M_1$ to the counting function $N_2(s', r')$ for the 
manifold $M_2$.

Let $\phi: M_1 \rightarrow M_2$
be the $C$-almost-isometry. Choose a basepoint $p\in M_1$, and let $q=\phi(p)$ 
be the basepoint in $M_2$. Consider the counting function $N_1(1,r)$ for the 
manifold $M_1$. For a given $r$, let $\{p_1, \ldots , p_N\}$ (where $N:=N_1(1, r)$) be the centers of the balls of 
radius $1$ for the minimal covering
of $B_p(r)$, and let $q_i:= \phi(p_i)$ be the corresponding image points in $M_2$. 

The covering of $B_p(r)$ by the set of balls $\{B_{p_i}(1)\}_{i=1}^N$ maps over to a covering 
$\{\phi \left( B_{p_i}(1)\right)\}_{i=1}^N$ of the set $\phi\left(B_p(r)\right)$. Since $\phi$ is an almost-isometry 
with additive constant $C$, we have for each $i$ that
$$\phi \left( B_{p_i}(1)\right) \subseteq B_{q_i}(1+C),$$
and hence we also have a covering $\{B_{q_i}(1+C)\}_{i=1}^N$ of the set $\phi\left(B_p(r)\right)$ 
by metric balls centered at $\{q_1, \ldots ,q_N\}$. 

Next, we note that the $C$-neighborhood of the set $\phi\left(B_p(r)\right)$ contains the set $B_q(r-2C)$.
Indeed, we know that $\phi(M_1)$ is $C$-dense in $M_2$, so given an arbitary point $x\in B_q(r-2C)$, we
can find a point $y\in M_1$ with the property that $d_2(\phi y , x) < C$. Now assume $y$ lies outside of $B_p(r)$.
Then $d_1(y, p)>r$, which would imply 
$$d_2(\phi y, q)= d_2(\phi y, \phi p) \geq d_1(y, p) -C > r-C.$$
Since $d_2(\phi y , x) < C$, the triangle inequality forces $d_2(x, q)>r-2C$, a contradiction. So we must have
$y\in B_p(r)$. 

Since the $C$-neighborhood of $\phi\left(B_p(r)\right)$ contains the set $B_q(r-2C)$, and we have a covering
$\{B_{q_i}(1+C)\}_{i=1}^N$ of the set $\phi\left(B_p(r)\right)$ by metric balls, we obtain a corresponding covering 
$\{B_{q_i}(1+2C)\}_{i=1}^N$ of the set $B_q(r-2C)$ by balls of radius $1+2C$. This implies that
$$N_1(1, r) \geq N_2(1+2C, r-2C).$$
Taking the log and the limit as $r\to \infty$, and taking into account Lemma \ref{count-covering}, we obtain the
pair of inequalities:
$$h_{vol}^+(M_1) \geq h_{vol}^+(M_2) \hskip 0.5in h_{vol}^-(M_1) \geq h_{vol}^-(M_2).$$
Applying the same argument to a coarse inverse almost-isometry
yields the pair of reverse inequalities, 
completing the proof of Theorem \ref{theorem-volgrowth}.

\begin{rem}
In the special case where the $M_i$ both have metrics of bounded negative sectional curvature, and support compact quotients, one can give an alternate proof of Theorem \ref{theorem-volgrowth} by exploiting the metric structures on the boundaries at infinity.
Indeed, fixing a basepoint $p\in M_1$ and corresponding basepoint $q:=\phi(p)$, one can construct metrics
on the boundaries at infinity $\partial _\infty M_1$ and $\partial _\infty M_2$. It follows then from work of 
Bonk and Schramm that the almost-isometry $\phi: M_1\rightarrow M_2$ induces a bi-Lipschitz homeomorphism
$\phi_\infty: \partial _\infty M_1 \rightarrow \partial _\infty M_2$ (see \cite[proof of Theorem 6.5]{BS}). 
In particular, the two boundaries have
identical Hausdorff dimension. But Otal and Peign\'e \cite{OP} have shown that for such manifolds, the Hausdorff dimension of the boundary at infinity coincides with the 
topological entropy of the geodesic flow on the compact quotient of the $M_i$ 
(which by Manning \cite{Ma} coincides with the volume growth entropy of the $M_i$).
\end{rem}

\subsection{Application - rigidity results.}
We now give a proof of Corollary \ref{rigidity-results}.

\begin{proof} We deal with each of the various cases separately.

\vskip 10pt

\noindent \underline{Case (1):} The manifold $M$ is finitely covered by the $2$-torus $T^2$.
Lifting the metrics $g_0, g_1$ to this finite cover, we see that it is enough to deal with the case where $M=T^2$. Then the 
metrics $\tilde g_0, \tilde g_1$ can be viewed as a pair of $\mathbb Z^2$-invariant metrics on $\mathbb R^2$. Associated to
these two periodic metrics, we have a pair of Banach norms on $\mathbb R^2$ defined via:
$$||v||_i := \lim_{r\to \infty} \frac{d_i(0, r v)}{r}$$
where $d_i$ is the distance function associated to the metric $g_i$. Burago \cite{Bu} 
showed that the identity map on $\mathbb R^2$
provides an almost-isometry from the Banach norm to the original periodic metric, i.e. there is a constant $C$ with the property
that for all vectors $v, w \in \mathbb R^n$, we have:
$$\left| ||v-w||_i - d_i(v, w)\right| < C.$$
We note that there is an alternate way to view the Banach norm: consider the pointed space $(\mathbb R^2, 0)$ 
with the sequence of metrics given by $\frac{d_i}{n}$ ($n\in \mathbb N$), and take the ultralimit. The resulting pointed space, the \textit{asymptotic cone}, is 
topologically $(\mathbb R^2, 0)$, equipped with the corresponding Banach norm (regardless of the choice of ultrafilter). We
denote by $F_i$ the unit ball, centered at $0$, in the Banach norm $||\cdot ||_i$.

Now assume we have an almost-isometry $\phi: (\mathbb R^2, d_0) \rightarrow (\mathbb R^2, d_1)$. Then passing to the
asymptotic cones, we obtain an {\it isometry} $\hat \phi: (\mathbb R^2, ||\cdot ||_0) \rightarrow (\mathbb R^2, ||\cdot||_1)$ 
fixing $0$, and sending the unit ball $F_0$ to the unit ball $F_1$. Since the geodesics in any Banach norm are straight lines, 
the map $\hat \phi$ is a linear map. Now for the flat metric $\tilde g_0$, we know that the associated Banach norm is a Euclidean
norm (i.e. the unit ball $F_0$ is an ellipsoid). Since $\hat \phi$ is linear, we have that $\hat \phi(F_0) = F_1$ is also an ellipsoid, and hence that $||\cdot ||_1$ is a (smooth) Euclidean norm. 

By Bangert's \cite[Theorem 5.3]{Ba}, the periodic minimal geodesics of $(T^2,g_1)$ in any nontrivial free homotopy class of $T^2$ foliate $T^2$.  By Innami \cite{In} (or \cite[proof of Theorem 6.1]{Ba}), the metric $g_1$ must also be flat.



\vskip 10pt

\noindent \underline{Cases (2-4):}
By our Theorem \ref{theorem-volgrowth} we have 
$h_{vol}(\tilde g_0) = h_{vol}(\tilde g_1)$ which immediately implies that $h_{vol}(\tilde g_0)\cdot Vol(g_0)\geq h_{vol}(\tilde g_1)\cdot Vol(g_1).$  Locally symmetric metrics uniquely minimize the functional $h_{vol}(-)^n\cdot Vol(-)$ in case (2) by Katok \cite{Ka}, in case (3) by Besson, Courtois, and Gallot \cite{BCG}, and in the conformal class in case (4) by Knieper \cite{Kn}.  In each of these cases, we conclude that $\tilde g_1 = \lambda \tilde g_0$ for some $0<\lambda<\infty$. Corollary
\ref{no-scale} implies $\lambda =1$, completing the proof of Cases (2-4).

\vskip 10pt

\noindent \underline{Case (5):} Let us briefly specify the metric $g_0$ -- for this metric, the individual negatively curved 
symmetric spaces factors are scaled as in \cite[Section 2]{CF}. Connell and Farb have now shown that the metric $g_0$
is the unique minimizer for the volume growth entropy on the space of locally symmetric metrics on $M$. In 
\cite[Theorem A]{CF}, they then proceed to show that $g_0$ is the unique minimizer of the functional $h_{vol}(-)^n\cdot Vol(-)$
on the space of all metrics on $M$. The same
argument as in cases (2-4) give the desired conclusion.

\end{proof}


\subsection{Application - the case of metric trees.} While we have primarily focused on Riemannian manifolds, some of
our results hold in greater generality. For instance, the proof of Theorem \ref{theorem-volgrowth} did not make any 
particular use of the fact that our metric was Riemannian. In fact, the very same proof yields the following more general
result. For $(X,d)$ a metric space of Hausdorff dimension $s$, denote by $\mathcal H^s$ the 
$s$-dimensional Hausdorff measure, and define the upper/lower exponential volume growth rate to be
$$h^+(X,d) := \limsup _{r\to \infty} \frac{ \ln \left( \mathcal H^s\left( B_p(r) \right) \right)}{r} \hskip 0.5in 
h^-(X,d) := \liminf _{r\to \infty} \frac{ \ln \left( \mathcal H^s\left( B_p(r) \right) \right)}{r} $$
where $B_p(r)$ is the metric ball of radius $r$ centered at a fixed basepoint $p\in X$ (these are independent
of the choice of basepoint). In the case where $h^+(X,d)=h^-(X,d)$, we denote the common value by $h(X,d)$, which
we call the exponential volume growth rate of $X$. The proof of Theorem \ref{theorem-volgrowth} in fact establishes: 

\begin{thm}\label{general-volume-growth}
Let $(X, d_1)$, $(X, d_2)$ be a pair of metric spaces of Hausdorff dimension $s$, and assume that there are two sided 
bounds on the $s$-dimensional Hausdorff measure of balls of any given radius. Then if $(X, d_1)$ is almost isometric
to $(X, d_2)$, we must have $h^+(X, d_1) = h^+(X,d_2)$, and $h^-(X, d_1) = h^-(X,d_2)$. 
\end{thm}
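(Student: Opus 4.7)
The plan is to follow exactly the same strategy as for Theorem \ref{theorem-volgrowth}, replacing Riemannian volume by $s$-dimensional Hausdorff measure and using the hypothesized two-sided bounds on $\mathcal{H}^s$ of balls in place of the bounds coming from bounded sectional curvature. The argument splits into a preparatory covering lemma followed by the almost-isometry comparison.

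First, I would establish a metric analogue of Lemma \ref{count-covering}. Fix a basepoint $p\in X$ and a scale parameter $s_0>0$. For a metric $d_i$, let $N_i(s_0,r)$ denote the minimal cardinality of a cover of the ball $B_p(r)$ by balls of radius $s_0$. The hypothesis gives constants $v_{s_0/2}, V_{s_0}>0$ such that every ball of radius $s_0/2$ has $\mathcal{H}^s$-measure at least $v_{s_0/2}$ and every ball of radius $s_0$ has $\mathcal{H}^s$-measure at most $V_{s_0}$. A maximal disjoint packing of $B_p(r)$ by balls of radius $s_0/2$ yields (with the same centers) a cover of $B_p(r)$ by balls of radius $s_0$, so
\[
\frac{\mathcal{H}^s(B_p(r))}{V_{s_0}} \;\leq\; N_i(s_0,r) \;\leq\; \frac{\mathcal{H}^s(B_p(r))}{v_{s_0/2}}.
\]
Taking logarithms, dividing by $r$, and letting $r\to\infty$ shows that $h^\pm(X,d_i)$ equals $\limsup_{r\to\infty}\bigl(\ln N_i(s_0,r)\bigr)/r$ (respectively $\liminf$), independent of $s_0$.

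Next, I would run the same covering-transfer argument used in Theorem \ref{theorem-volgrowth}. Let $\phi\colon (X,d_1)\to(X,d_2)$ be a $C$-almost-isometry and set $q=\phi(p)$. A minimal cover of $B_p(r)\subset (X,d_1)$ by $N_1(1,r)$ balls of radius $1$, with centers $p_1,\dots,p_N$, pushes forward under $\phi$: since $\phi$ adds at most $C$ to distances, the balls $B_{q_i}(1+C)$ (where $q_i=\phi(p_i)$) cover $\phi(B_p(r))$. Because $\phi(X)$ is $C$-dense in $X$, a short triangle-inequality chase shows $B_q(r-2C)$ lies in the $C$-neighborhood of $\phi(B_p(r))$, hence is covered by the enlarged balls $B_{q_i}(1+2C)$. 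This gives
\[
N_1(1,r) \;\geq\; N_2(1+2C,\, r-2C).
\]
Taking $\limsup$ and $\liminf$ as $r\to\infty$, combined with the preparatory step, yields $h^\pm(X,d_1)\geq h^\pm(X,d_2)$. Applying the same argument to a coarse inverse almost-isometry (provided by Lemma \ref{defB}) furnishes the reverse inequalities.

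The main obstacle — or rather the only place where the hypothesis does any real work — is the preparatory covering lemma. In the Riemannian setting the two-sided bounds on ball volumes at a fixed radius came for free from compactness of the quotient (or from bounded sectional curvature via \cite{Bi,Gun}); here we must assume them outright, and we must verify they are strong enough to run the packing-to-covering comparison at both the scales $s_0/2$ and $s_0$. Once that is in place, the remainder of the proof is a verbatim transcription of the Riemannian argument, with $\mathcal{H}^s$ in place of $\mathrm{Vol}$.
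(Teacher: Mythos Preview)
Your proposal is correct and matches the paper's approach exactly: the paper does not give a separate proof of Theorem \ref{general-volume-growth}, but simply observes that the argument for Theorem \ref{theorem-volgrowth} (Lemma \ref{count-covering} together with the covering-transfer under the almost-isometry) goes through verbatim with $\mathcal{H}^s$ in place of Riemannian volume, using the hypothesized two-sided bounds on $\mathcal{H}^s$ of balls. Your write-up faithfully reproduces that argument.
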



For an easy example illustrating this more general setting, consider the setting of connected metric graphs. 
The $1$-dimensional Hausdorff measure of a $\tilde d$-ball of radius $r$ in the graph will then be the sum of 
the edge lengths of the (portions of) edges inside the ball. If one 
imposes a lower bound on the length of edges, and an upper bound on the degree of vertices, 
this easily leads to two 
sided bounds on the $1$-dimensional Hausdorff measure of balls of any given radius. So Theorem \ref{general-volume-growth}
applies to this class of metric spaces.

Let us give an application of this: consider a finite combinatorial graph $X$, with the property
that each vertex has degree $\geq 3$. The universal cover of $X$ is then a combinatorial tree $T$. One can metrize $X$
in many different ways, by assigning lengths to each edge, and making each edge isometric to an interval of the corresponding
length. We let $\mathcal M(X)$ be the space of such metrics.
Any such metric $d$ lifts to give a $\pi_1(X)$-invariant metric $\tilde d$ on the tree $T$, with lower bounds on the edge
lengths and upper bounds on the degree of vertices. In this special case one has that $h^+(T, \tilde d)=h^-(T, \tilde d)$,
and we will denote the common value by $h_{vol}(d)$. Then Theorem \ref{general-volume-growth} 
tells us that for $d_0, d_1\in \mathcal M (X)$ arbitrary, if $(T, \tilde d_0)$ is almost-isometric to $(T, \tilde d_1)$, then 
$h_{vol}(d_0) = h_{vol}( d_1)$.

We now view $h_{vol}$ as a function on the space $\mathcal M(X)$, an open cone inside some large $\mathbb R^n$ 
(where $n$ is the number of edges in $X$). It is easy to see, from the scaling property of Hausdorff
dimension, that 
$$h_{vol}(\alpha \cdot d) = \frac{1}{\alpha} h_{vol}(d).$$
As such, it is reasonable to impose a normalizing condition, e.g. letting $\mathcal M_1(X) \subset \mathcal M(X)$ 
be the subspace of metrics whose sum of lengths is $=1$. The behavior of $h_{vol}$ on the subspace $\mathcal M_1(X)$
was studied by Lim in her thesis, and she showed \cite{Li} that there is a {\it unique} metric $d_0$ which {\it minimizes}
$h_{vol}$ -- moreover, she gave an explicit computation of this metric in terms of the degrees at the various vertices of $X$.
Some related work was done by Kapovich and Nagnibeda \cite{KS} and by Rivin \cite{Ri}. In conjunction with Lim's 
result, our Theorem \ref{general-volume-growth} implies the following:

\begin{cor}\label{rigidity-tree}
Let $X$ be a combinatorial graph, $d_0$ the metric produced by Lim, and $d_1\in \mathcal M_1(X)$ any metric on
$X$ distinct from $d_0$. Then $(T, \tilde d_0)$ and $(T, \tilde d_1)$ are {\bf not} almost isometric.
\end{cor}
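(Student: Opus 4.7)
The plan is to give a short proof by contradiction that combines Theorem \ref{general-volume-growth} with Lim's uniqueness result. Suppose, toward a contradiction, that $(T, \tilde{d}_0)$ and $(T, \tilde{d}_1)$ are almost-isometric; the goal is to deduce that $h_{vol}(d_0) = h_{vol}(d_1)$ and then contradict the uniqueness of the $h_{vol}$-minimizer on $\mathcal{M}_1(X)$.

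First I would verify that Theorem \ref{general-volume-growth} applies to the pair $(T,\tilde d_0), (T,\tilde d_1)$. Since $X$ is a finite graph, the degrees of vertices are bounded above, and for each fixed $d_i \in \mathcal{M}_1(X)$ the minimum edge length is a strictly positive real number (the sum of the finitely many edge lengths being $1$). As noted in the paragraph preceding the corollary, these two properties ensure uniform two-sided bounds on the $1$-dimensional Hausdorff measure of metric balls of any given radius in $(T,\tilde d_i)$, so the hypotheses of Theorem \ref{general-volume-growth} are satisfied. That theorem then yields $h^+(T,\tilde d_0) = h^+(T,\tilde d_1)$ and $h^-(T,\tilde d_0) = h^-(T,\tilde d_1)$. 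As observed earlier, the upper and lower rates coincide for metrics lifted from $\mathcal M(X)$, so the common value $h_{vol}(d_i)$ is well-defined and we conclude $h_{vol}(d_0) = h_{vol}(d_1)$.

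To finish, invoke Lim's theorem \cite{Li}: the metric $d_0$ is the \emph{unique} minimizer of $h_{vol}$ on $\mathcal M_1(X)$. Since $d_1 \in \mathcal M_1(X)$ and $d_1 \neq d_0$ by hypothesis, this forces the strict inequality $h_{vol}(d_1) > h_{vol}(d_0)$, contradicting the equality derived in the previous paragraph. Therefore no almost-isometry $(T,\tilde d_0) \to (T,\tilde d_1)$ exists.

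There is no genuine obstacle in this argument: the corollary is a formal consequence of the two inputs, and the only mild point of care is checking that the Hausdorff-measure boundedness hypothesis of Theorem \ref{general-volume-growth} is met for the lifted tree metrics, which follows from finiteness of $X$ and the normalization.
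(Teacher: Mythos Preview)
Your proof is correct and follows precisely the approach the paper intends: the corollary is stated immediately after the sentence ``In conjunction with Lim's result, our Theorem~\ref{general-volume-growth} implies the following,'' with no further proof given, so the paper's argument is exactly the combination of Theorem~\ref{general-volume-growth} (equal volume growth under almost-isometry) with Lim's uniqueness of the minimizer that you have spelled out. Your verification of the Hausdorff-measure bounds from finiteness of $X$ and positivity of edge lengths simply makes explicit what the paper records in the preceding paragraph.
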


\begin{rem}
The reader will undoubtedly wonder as to whether some similar result holds for the Fuchsian buildings discussed in 
Section \ref{proof-Fuchsian-building}. While Theorem \ref{general-volume-growth} applies to Fuchsian 
buildings (of course, using $2$-dimensional Hausdorff measure, and appropriate constraints on the metrics),
the behavior of the functional $h_{vol}$ on the
corresponding moduli space of metrics is much more mysterious. In particular, (local) minimizers of the functional are
not known, and indeed uniqueness of such a minimizer is not known (see Ledrappier and Lim \cite{LL} for some
work on this question).
\end{rem}



\section{Concluding remarks}\label{section-concluding-remarks}

Much of the work in this paper was motivated by the following:

\vskip 10pt

\noindent {\bf Question 1:} Let $M$ be an aspherical manifold with universal cover $\tilde M$. Can one
find a pair of Riemannian metrics $g, h$ on $M$, whose lifts to the universal cover $(\tilde M, \tilde g), 
(\tilde M, \tilde h)$ are almost-isometric but {\bf not} isometric?

\vskip 10pt



Our results in this paper give a number of examples (see Corollaries \ref{symmetric}, \ref{entropy}, \ref{rigidity-results})
of pairs of metrics on compact manifolds whose lifts to the universal 
cover are quasi-isometric, but {\bf not} almost-isometric. Thus any QI between the universal covers must have multiplicative
constant $>1$. One can ask whether there is a ``gap'' in the multiplicative constant. We suspect this is not the case in 
general.

\vskip 10pt

\noindent {\bf Question 2:} Can one find an aspherical manifold $M$ and a pair of Riemannian metrics $g, h$, with the 
property that
the universal covers are $(C_i, K_i)$-quasi-isometric via a sequence of maps $f_i$, where $C_i\to 1$, but are {\bf not}
almost-isometric.

\vskip 10pt

In the special case where $M$ is a higher genus surface, and the metrics under consideration are negatively curved,
one has complete answers to both of the above questions (see \cite{LSvL}).

\vskip 5pt

In a different direction, we saw in our Theorem \ref{theorem-volgrowth} that the rate of exponential growth is an
almost-isometry invariant (though it is {\bf not} a quasi-isometry invariant). At the other extreme, universal covers of
infra-nil manifolds, equipped with the lift of a metric, are known to have {\it polynomial} growth. More precisely, 
$Vol(B(r)) \sim C(g)\cdot r^k$ where the integer $k\in \mathbb N$ depends only on $M$, but the constant $C(g)$ depends 
on the chosen metric $g$ on $M$. One can ask the following:

\vskip 10pt

\noindent {\bf Question 3:} Let $M$ be an infra-nil manifold, and $g, h$ a pair of Riemannian metrics on $M$. Denote by $C(g),
C(h) \in (0, \infty)$ the coefficient for the polynomial growth rate of balls in $\tilde M$. If $(\tilde M, \tilde g)$ is almost-isometric
to $(\tilde M, \tilde h)$, does it follow that $C(g) = C(h)$?

\vskip 10pt

It is easy to see that the estimates appearing in our proof of Theorem \ref{theorem-volgrowth} are too crude to deal
with the coefficient of polynomial growth. In the special case where $(M,g)$ is a flat surface, we have an affirmative answer to 
{\bf Question 3}: our Corollary \ref{rigidity-results} implies that $h$ must also be flat, from which it is immediate that 
$C(g)=C(h)$. Observe that the exponential volume growth rate can alternatively be interpreted as an isoperimetric
profile, or as a filling invariant (in the sense of Brady and Farb \cite{BF}). One could also ask whether one can use
these alternate viewpoints to define some new almost-isometry invariants.

\vskip 10pt 

We have focused on almost-isometric metrics on the universal cover of a fixed topological manifold $M$.
We could also ask similar questions for a pair of closed smooth manifolds $(N^n, g)$, $(M^m, h)$ where $n\leq m$. For 
instance, can one find an almost-isometric embedding $(\tilde N, \tilde g) \rightarrow (\tilde M, \tilde h)$ which is not at finite 
distance from an isometric embedding? In the case where the universal covers are isometric to irreducible (Euclidean) 
buildings, or to irreducible non-positively curved symmetric spaces of equal rank $r>1$, recent work of Fisher and Whyte 
establishes that every almost-isometric embedding is at finite Hausdorff distance from an isometric embedding
(see \cite[Corollary 1.8]{FW}).

\vskip 10pt

Finally, while our purpose in this paper was mostly the study of {\it spaces} up to almost-isometry, one can ask similar
questions at the level of finitely generated {\it groups}. One says that a pair of finitely generated groups $G, H$ are 
almost isometric provided one can find finite symmetric generating
sets $S\subset G$, $T\subset H$ so that the corresponding metrics spaces $(G, d_S)$ and $(H, d_T)$ are almost-isometric.
A basic problem here is to resolve:

\vskip 10pt

\noindent {\bf Question 4:} Let $G$, $H$ be a pair of quasi-isometric groups. Must they be almost-isometric?

\vskip 10pt

For instance, it is easy to see that commensurable groups are almost-isometric. In general, one suspects that the answer
should be ``no'', though again examples seem elusive. The corresponding question for bi-Lipschitz equivalence was answered
in the negative by Dymarz \cite{Dy}. One aspect which seems to make the almost-isometric question harder than the
corresponding bi-Lipschitz question lies in the fact that distinct word metrics on a fixed finitely generated group $G$ are 
not a priori almost-isometric to each other, whereas they are always bi-Lipschitz equivalent. This means that understanding
groups up to AI involves understanding {\it all} word metrics. For instance, let us specialize to the case where the groups
$G, H$ have exponential growth. Then by varying the possible generating sets for $G, H$, and looking at the 
corresponding exponential volume growth rate, one obtains the {\it growth spectra} $\spec (G), \spec (H) \subset (0, \infty)$.
An affirmative answer to {\bf Question 4} would imply, by Theorem \ref{general-volume-growth},
that when $G$, $H$ are quasi-isometric, $\spec(G)\cap \spec(H)\neq \emptyset$ -- a result which seems unlikely to be true in full generality.

\vskip 20pt

\centerline{\bf Acknowledgments}

\vskip 10pt

The authors thank Xiangdong Xie for helpful comments. 

\vskip 20pt




\end{document}